\DeclareMathOperator{\Span}{span}
\newcommand{\concat}{\|}
\theoremstyle{plain}
\newtheorem{theorem}{Theorem}[section]
\newtheorem{corollary}[theorem]{Corollary}
\newtheorem{proposition}[theorem]{Proposition}
\newtheorem{lemma}[theorem]{Lemma}
\theoremstyle{definition}
\newtheorem{definition}[theorem]{Definition}
\theoremstyle{remark}
\newtheorem{remark}[theorem]{Remark}
\title{On Calculating the Chromatic Symmetric Function}
\author[1]{Nima Amoei Mobaraki\footnote{nima.amoei.mobaraki@gmail.com}}
\author[1,2]{Yasaman Gerivani\footnote{gerivaniyasaman@gmail.com}}
\author[1]{Sina Ghasemi Nezhad\footnote{sina.ghaseminejad@gmail.com}}
\affil[1]{\small Department of Mathematical Sciences, Sharif University of Technology, Tehran, Iran}
\affil[2]{\small Department of Chemical and Petroleum Engineering, Sharif University of Technology, Tehran, Iran}
\date{}
\begin{document}

\maketitle


\begin{abstract}
    This paper investigates methods for calculating the chromatic symmetric function (CSF) of a graph in chromatic-bases and the $m_\lambda$-basis. Our key contributions include a novel approach for calculating the CSF in chromatic-bases constructed from forests and an efficient method for determining the CSF in the $m_\lambda$-basis. As applications, we present combinatorial proofs for two known theorems that were originally established using algebraic techniques. Additionally, we demonstrate that an algorithm introduced by Aliste-Prieto, de Mier, Orellana, and Zamora can be viewed as a case of our proposed method.
\end{abstract}


\section{Introduction} \label{section:intro}

The chromatic symmetric function (CSF) for a simple graph was first introduced by Stanley in 1995 as follows:

\begin{definition} \label{def:csf}
    \cite{stanley1995symmetric}
    Let $G$ be a finite graph with vertex set $V(G)$. The \emph{chromatic symmetric function}, denoted $X_G(x)$, is given by:
    $$
        X_G(x) \triangleq \sum_{\kappa} \prod_{v \in V(G)} x_{\kappa(v)},
    $$
    where the sum runs over all proper colorings $\kappa : V(G) \to \mathbb{Z}^{+}$. A coloring $\kappa$ is proper if adjacent vertices receive distinct colors.
\end{definition}

Stanley explored numerous properties of the CSF, demonstrating its connection to classical bases in symmetric function theory. A key observation is that setting $x_i = 1$ for $1 \leq i \leq k$ and $x_j = 0$ for $j > k$ recovers the chromatic polynomial $\chi_G(k)$, which counts the valid colorings of $G$ using exactly $k$ colors. The CSF has since become a topic of significant interest, with contributions from various researchers; see, e.g., \cite{martin2008distinguishing, orellana2014graphs, cho2015chromatic, aliste2017trees, aliste2023marked, chan2024graph, sagan2024chromatic}.

Two major conjectures have shaped the study of the CSF: the $e$-\emph{Positivity Conjecture}~\cite{stanley1995symmetric}, which asserts that if a poset is $(3+1)$-free, then its incomparability graph has a nonnegative expansion in terms of elementary symmetric functions, and the \emph{Tree Isomorphism Conjecture}, which proposes that the CSF uniquely identifies trees up to isomorphism. The first of these conjectures was recently resolved by Hikita~\cite{hikita2024proof}, while the second remains open but has been verified for trees with fewer than 30 vertices \cite{heil2018algorithm} and specific tree families \cite{martin2008distinguishing, loebl2019isomorphism}.

A useful strategy in studying the CSF has been comparing it to other graph polynomials, such as the \emph{subtree polynomial} of Chaudhary and Gordon~\cite{martin2008distinguishing}, the $W$-polynomial of Noble and Welsh~\cite{noble1999weighted}, and the \emph{Tutte polynomial}. For instance, Loebl and Sereni \cite{loebl2019isomorphism} established that caterpillar trees are uniquely determined by the CSF via their $W$-polynomial. This connection is particularly noteworthy since the CSF and the $W$-polynomial coincide for trees~\cite{noble1999weighted}, suggesting an alternative approach to the Tree Isomorphism Conjecture via the $W$-polynomial.

Beyond these applications, the CSF has been studied in contexts such as Schur positivity, which ties it to the representation theory of symmetric and general linear groups. Extensions of the CSF have also been explored, including the $q$-quasisymmetric function of Shareshian and Wachs~\cite{shareshian2016chromatic}, non-commutative analogues~\cite{gebhard2001chromatic}, and variations that incorporate root structures~\cite{loehr2024rooted} or weighted edges~\cite{crew2020deletion}.

Our work, along with recent developments, makes use of linear combination relations. In Section \ref{section:main-1}, we employ a relation introduced by Orellana and Scott~\cite{orellana2014graphs}. Another notable relation is the \emph{Deletion-Near-Contraction} (DNC) relation, recently proposed by Aliste-Prieto, De-Mier, Orellana, and Zamora~\cite{aliste2023marked}, and also utilized in \cite{orellana2014graphs}.

Another key direction in CSF research was initiated by Cho and van Willigenburg~\cite{cho2015chromatic}, who introduced graph-based bases for symmetric functions, referred to as \emph{chromatic bases}. These bases are constructed from sequences of connected graphs $\{G_n\}_{n \geq 1}$ where each $G_n$ has exactly $n$ vertices. The appeal of chromatic bases stems from their direct relation to graph structures, which enhances our understanding of the CSF's behavior. Among these, the \emph{star-basis}, introduced by Gonzalez, Orellana, and Tomba, has been of particular interest. In Section \ref{section:main-1}, we investigate computational methods for expressing the CSF in a chromatic-basis and discuss its implications.

The central theme of this paper is the concept of \emph{relativizing graphs}, which we believe holds considerable potential, particularly in settings where algebraic structures are imposed on graphs and require interrelations. With this perspective, in Section \ref{section:main-1}, we propose a framework for computing the CSF of forests within a forest-basis, defined as a chromatic-basis comprising only forests. In Section \ref{section:main-2}, we establish a methodology for relating a given graph $G$ to smaller graphs and introduce a linear relation for determining the coefficients of $X_G$ in the $m_\lambda$-basis.

Additionally, in Section \ref{section:main-1}, we define an operation on graphs inspired by a relation from \cite{orellana2014graphs} and demonstrate its utility in constructing paths between graphs, thereby enabling both combinatorial and algebraic connections. In particular, we develop a systematic approach for forming transitions between graphs such that their CSFs can be expressed in terms of each other and their smaller subgraphs. The smaller graphs, in this context, serve as the difference between two larger graphs, allowing us to control and manipulate these differences to establish novel relations, similar to the DNC-relation. Through this approach, we rigorously prove propositions concerning the interdependence of CSFs for graphs that are connected through such transitions. As applications, we provide combinatorial proofs for two known theorems: the chromatic-basis theorem of Cho and van Willigenburg~\cite{cho2015chromatic} and the equivalence of the $U$-polynomial and the CSF over forests, as established by Loebl and Sereni~\cite{noble1999weighted}. Furthermore, we illustrate how the algorithm from~\cite{aliste2023marked} can be viewed as an interesting case of our method.

In Section \ref{section:main-2}, we adopt an approach inspired by the \emph{Reconstruction Conjecture}, leveraging morphism counts from smaller graphs to $G$ to derive a relation between their CSFs and $X_G$. Since it is known that the Reconstruction Conjecture holds for trees, we aim to utilize CSFs to recover subgraphs from a given tree and explore whether this methodology can extend to solving the Tree Conjecture. To this end, we establish linear relations between the CSF of a graph and the CSFs of its smaller subgraphs, enabling us to compute the number of subgraphs of a given graph systematically. Furthermore, we examine the vector space spanned by CSFs of graphs, shedding light on structural properties that emerge from this perspective.


\section{Preliminaries} \label{section:pre}

For the basic concepts in graph theory, we refer the reader to any standard introductory text, such as~\cite{west2001introduction}. For convenience, we outline some of the key definitions here. For information regarding the reconstruction conjecture, we refer the reader to~\cite{harary2006survey}.

A \emph{graph} $ G $ is an ordered pair $ (V, E) $, where $ V $ is a finite set of \emph{vertices} and $ E $ is a multiset of \emph{edges}. Each edge $ e $ is an unordered pair of two distinct vertices $ u $ and $ v $, which are called the \emph{endpoints} of $ e $. Note that our graphs are \emph{simple}, meaning that we do not allow for parallel edges and loops. A \emph{loop} in $ G $ is an edge connecting a vertex to itself, while two edges are \emph{parallel} if they connect the same vertices.

Let $ \mathbb{Z}^{+}$ denote the set of positive integers. A \emph{proper coloring} of $ G $ is a function $ \kappa : V \rightarrow \mathbb{Z}^{+} $ such that $ \kappa(u) \neq \kappa(v) $ whenever $ u $ and $ v $ are adjacent. If the image of $ \kappa $ is restricted to the subset $ \left\{1, 2, \ldots, k\right\} $, we say that $ \kappa $ is a $k$-\emph{coloring} of $ G $. The total number of $ k $-colorings of $ G $ is denoted by $ \chi_G(k) $, and $ \chi_G(k) $ is known to be a polynomial in $ k $, called the \emph{chromatic polynomial} of $ G $.

A \emph{subgraph} $G^{\prime} \subseteq G$ of a graph $G = (V, E)$ is a graph $G^{\prime} = (V^{\prime}, E^{\prime})$ such that $V^{\prime} \subseteq V$ and $E^{\prime} \subseteq E$, and for each edge $e\in E^\prime$, the endpoints of $e$ are included in $V^{\prime}$. A subgraph is said to be \emph{induced} by the vertex set $V^{\prime} \subseteq V$ if every edge in $E$ with both endpoints in $V^{\prime}$ is also included in $E^{\prime}$.

\begin{definition} \label{def:u-poly}
    \cite{noble1999weighted}
    The $U$-\emph{polynomial} $U_G(x, y)$, which is a special case of the $W$-polynomial, of a graph $G$ with vertex set $V(G)$ and edge set $E(G)$ is a multivariate polynomial defined as follows:

    Let $x = (x_1, x_2, \ldots)$ and $y$ be commuting indeterminates. For a subset $A \subseteq E(G)$, let $k(G|_A)$ be the number of connected components of the subgraph induced by $A$. The $U$-polynomial is given by:
    $$
        U_G(x, y) \triangleq \sum_{A \subseteq E(G)} \left( \prod_{i=1}^{|V(G)|} x_i^{c_i(A)} \right) (y-1)^{|A|-k(G|_A)} ,
    $$
    where $c_i(A)$ denotes the number of components of $G|_A$ with $i$ vertices.
\end{definition}

\begin{remark} \label{def:u-poly-forest}
    As demonstrated in~\cite{aliste2017trees}, the $U$-polynomial of a forest $F$ can be expressed as follows:
    $$
        U_F (x) = \sum_{A \subseteq E(F)} x_{\lambda(A)} = \sum_{A \subseteq E(F)} x_{\lambda(E(F) \setminus A)}.
    $$
    Here, $\lambda (A) = (\lambda_1, \ldots, \lambda_\ell)$ is the partition induced by the number of vertices on each connected component of $G|_A$, and $x_{\lambda(A)}$ is the monomial $x_{\lambda_1} \cdots x_{\lambda_\ell}$. Also, as defined in~\cite{aliste2017trees}, the \emph{restricted $U$-polynomial} of a forest $F$, given a positive integer $k$, is defined as:
    $$
        U_F^k (x) \triangleq \sum_{\substack{A \subseteq E(F),\\ |A| \leq k}} x_{\lambda (A)}.
    $$
\end{remark}

\begin{definition} \label{def:csf-vec}
    A \emph{partition} of $n \in \mathbb{Z}^{+}$, denoted by $\lambda \triangleq (\lambda_1, \lambda_2, \ldots, \lambda_\ell)$, is a sequence of positive integers $\lambda_i \in \mathbb{Z}^{+}$ such that $\sum_{i=1}^{\ell} \lambda_i = n$. This is written as $\lambda \vdash n$. Throughout this paper, we assume that the parts $\lambda_i$ are arranged in descending order.

    For $n \in \mathbb{Z}^{+}$, let the \emph{partition number} $p(n)$, represent the number of distinct partitions of $n$.

    For a partition $\lambda = (\lambda_1, \lambda_2, \ldots, \lambda_\ell)$, define the \emph{length of $\lambda$}, denoted by $\ell(\lambda)$, the number of parts in $\lambda$.
\end{definition}

\begin{definition} \label{def:part-func}
    Let $G$ be a graph with $n$ vertices. The \emph{partition function} of $G$ is defined as:
    $$
        \text{Part}(G) \triangleq \lambda, \ \text{where} \ \lambda = (\lambda_1, \lambda_2, \ldots, \lambda_\ell) \vdash n,
    $$
    with $\lambda_i$ representing the size of the $i$-th connected component of $G$.
\end{definition}

\begin{definition}
    Let $\lambda$ and $\mu = (\mu_1, \mu_2, \ldots, \mu_\ell)$ be two partitions of $n \in \mathbb{Z}^{+}$. We say that $\mu$ is \emph{finer} than $\lambda$, or equivalently, that $\lambda$ is \emph{coarser} than $\mu$, denoted by $\mu \leq \lambda$, if $\lambda$ can be obtained by replacing some of the parts $\mu_i$ in $\mu$ with their sum.
\end{definition}

\begin{definition}
    Assume that $\lambda \vdash n $ is a partition such that $\lambda = (\lambda_1,\lambda_2,\ldots,\lambda_k,1,1, \ldots, 1)$, with $\lambda_k > 1$. We call $\lambda^{\ast} \triangleq (\lambda_1,\lambda_2,\ldots,\lambda_k)$ the reduced form of $\lambda$. Furthermore, two partitions $\lambda^1$ and $\lambda^2$ are called equivalent if their reduced forms are equal. In addition, a partition $\lambda^1 \vdash n$ can be $s$-\emph{reduced} if there is a partition $\lambda^2 \vdash s$ such that $s\leq n$ and $\lambda^1$ and $\lambda^2$ are equivalent. $\lambda ^2$ is called the $s$-\emph{reduced form} of $\lambda ^1$.
\end{definition}

\begin{definition} \label{def:graph-vec-space}
    Let $A$ be a set of graphs. We define \emph{graph vector space} of $A$, denoted by $\mathcal{V}(A)$, as:
    $$
        \mathcal{V}(A) \triangleq \Span\left\{ X_G \mid G \in A \right\}.
    $$
\end{definition}

\begin{theorem} \label{thm:tri-rec-formula}
    \cite{orellana2014graphs}
    Let $G$ be a graph where $ e_1, e_2, e_3 \in E(G) $ form a triangle. Define the following subgraphs:
    \begin{itemize}
        \item $ G_{2,3} = (V(G), E(G) \setminus \left\{e_1\right\}) $,
        \item $ G_{1,3} = (V(G), E(G) \setminus \left\{e_2\right\}) $,
        \item $ G_3 = (V(G), E(G) \setminus \left\{e_1, e_2\right\}) $.
    \end{itemize}
    Then the CSF $ X_G $ of $G$ can be expressed as:
    $$
        X_G = X_{G_{2,3}} + X_{G_{1,3}} - X_{G_3}.
    $$
\end{theorem}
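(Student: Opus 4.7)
The plan is to argue directly from Stanley's definition by comparing the sets of proper colorings of the four graphs $G, G_{1,3}, G_{2,3}, G_3$ via a two-event inclusion-exclusion. Let $a,b,c$ be the vertices of the triangle, with $e_1 = \{a,b\}$, $e_2 = \{b,c\}$, and $e_3 = \{a,c\}$. The crucial observation is that $e_3$ is retained in all four graphs, so every proper coloring $\kappa$ of any of them satisfies $\kappa(a) \neq \kappa(c)$; in particular $\kappa(b)$ cannot equal both $\kappa(a)$ and $\kappa(c)$ simultaneously.

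First I would classify the proper colorings of $G_3$, the graph with the weakest constraints, into three pairwise disjoint families according to whether the deleted edges $e_1, e_2$ happen to be respected: Type~A where $\kappa(a) \neq \kappa(b)$ and $\kappa(b) \neq \kappa(c)$; Type~B where $\kappa(a) = \kappa(b)$ (which forces $\kappa(b) \neq \kappa(c)$ by the observation above); and Type~C where $\kappa(b) = \kappa(c)$ (which forces $\kappa(a) \neq \kappa(b)$). These three types exhaust the set of proper colorings of $G_3$ precisely because $e_3$ rules out the otherwise possible fourth case $\kappa(a) = \kappa(b) = \kappa(c)$.

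Next I would match each graph to a union of these types: proper colorings of $G$ are exactly those of Type~A; of $G_{2,3}$ (which still forbids $\kappa(b) = \kappa(c)$) are Type~A together with Type~B; of $G_{1,3}$ (which still forbids $\kappa(a) = \kappa(b)$) are Type~A together with Type~C; and of $G_3$ are the union of all three. Writing $S_A, S_B, S_C$ for the corresponding weighted sums $\sum_\kappa \prod_v x_{\kappa(v)}$, this gives $X_G = S_A$, $X_{G_{2,3}} = S_A + S_B$, $X_{G_{1,3}} = S_A + S_C$, and $X_{G_3} = S_A + S_B + S_C$, so that $X_{G_{2,3}} + X_{G_{1,3}} - X_{G_3} = S_A = X_G$.

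There is no real obstacle here; the entire content of the identity is the bookkeeping enabled by the presence of $e_3$, which forces the two inclusion-exclusion events ``$e_1$ is respected'' and ``$e_2$ is respected'' to jointly cover all proper colorings of $G_3$. The same argument applies verbatim to the one-variable chromatic polynomial $\chi_G(k)$ obtained by the specialization described after Definition~\ref{def:csf}.
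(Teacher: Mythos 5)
Your proof is correct. The paper states Theorem~\ref{thm:tri-rec-formula} as a cited result from Orellana and Scott without providing its own proof, and your argument matches the standard one: fixing the retained edge $e_3$ forces $\kappa(a)\neq\kappa(c)$ for every proper coloring of $G_3$, which makes the three types $A$, $B$, $C$ pairwise disjoint and exhaustive, and the identification of the colorings of $G$, $G_{2,3}$, $G_{1,3}$, $G_3$ with $A$, $A\cup B$, $A\cup C$, $A\cup B\cup C$ respectively yields $X_{G_{2,3}}+X_{G_{1,3}}-X_{G_3}=S_A=X_G$ at the level of weighted coloring sums.
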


\begin{corollary} \label{cor:triless-rec-formula}
    \cite{orellana2014graphs}
    Let $G$ be a graph with the adjacent edges $ e_1 = vv_1 $, $ e_2 = vv_2 $, and $ e_3 = v_1v_2 \notin E(G) $; that is, $ e_1 $ and $ e_2 $ meet at the vertex $ v $, but there is no edge connecting $ v_1 $ to $ v_2 $. Define the following graphs:
    \begin{itemize}
        \item $ G_{1,3} = (V(G), (E(G) \setminus \left\{e_2\right\}) \cup \left\{e_3\right\}) $,
        \item $ G_{2,3} = (V(G), (E(G) \setminus \left\{e_1\right\}) \cup \left\{e_3\right\}) $,
        \item $ G_1 = (V(G), E(G) \setminus \left\{e_2\right\}) $,
        \item $ G_3 = (V(G), (E(G) \setminus \left\{e_1, e_2\right\}) \cup \left\{e_3\right\}) $.
    \end{itemize}
    Then the CSF $ X_G $ of $G$ can be expressed as:
    $$
        X_G = X_{G_{2,3}} + X_{G_1} - X_{G_3}.
    $$
\end{corollary}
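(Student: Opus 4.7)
The plan is to reduce this to Theorem \ref{thm:tri-rec-formula} by artificially inserting the missing edge $e_3$ so that a genuine triangle appears. Define the auxiliary graph $H \triangleq (V(G), E(G) \cup \{e_3\})$. Since $e_1 = vv_1$, $e_2 = vv_2$, $e_3 = v_1v_2$, the three edges $e_1, e_2, e_3$ form a triangle in $H$, so Theorem \ref{thm:tri-rec-formula} applies.

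First, I would apply Theorem \ref{thm:tri-rec-formula} to $H$ with the edges labeled exactly as in the corollary, i.e.\ deleting $e_1$ and $e_2$. Inspecting the edge sets shows $H \setminus \{e_1\} = G_{2,3}$, $H \setminus \{e_2\} = G_{1,3}$, and $H \setminus \{e_1, e_2\} = G_3$, yielding
$$
X_H \;=\; X_{G_{2,3}} + X_{G_{1,3}} - X_{G_3}.
$$
Next, I would apply Theorem \ref{thm:tri-rec-formula} to the same $H$ but with the triangle edges relabeled, treating $e_3$ and $e_2$ as the two edges that get removed (and $e_1$ as the remaining ``third'' edge of the triangle). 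The theorem's hypotheses are symmetric in the choice of the two deleted edges of the triangle, so this is legal. Here $H \setminus \{e_3\} = G$, $H \setminus \{e_2\} = G_{1,3}$, and $H \setminus \{e_2, e_3\} = G_1$, giving
$$
X_H \;=\; X_G + X_{G_{1,3}} - X_{G_1}.
$$

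Finally, equating the two expressions for $X_H$ and cancelling the common term $X_{G_{1,3}}$ produces $X_G = X_{G_{2,3}} + X_{G_1} - X_{G_3}$, as desired. There is no real obstacle beyond bookkeeping; the only point worth double-checking is that the two applications of Theorem \ref{thm:tri-rec-formula} correctly identify each of $H \setminus \{e_i\}$ and $H \setminus \{e_i, e_j\}$ with the four named subgraphs $G_{1,3}, G_{2,3}, G_1, G_3$ of the corollary.
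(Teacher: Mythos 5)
Your proof is correct: inserting the missing edge $e_3$ to form $H = (V(G), E(G)\cup\{e_3\})$, applying Theorem~\ref{thm:tri-rec-formula} to the triangle in $H$ twice (once deleting $e_1,e_2$ and once deleting $e_3,e_2$), and cancelling $X_{G_{1,3}}$ gives exactly the stated identity, with all four subgraphs $G_{2,3}, G_{1,3}, G_1, G_3$ correctly identified. The paper itself does not prove this corollary (it is cited from the Orellana--Scott reference), and your argument is the standard derivation used there.
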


\begin{definition} \label{def:mo-vec}
    Given two graphs $G$ and $H$, such that $|V(G)| > |V(H)|$. We define the \emph{induced subgraph count} of $H$ in $G$, denoted by $\binom{G}{H}$, as:
    $$
        \binom{G}{H} \triangleq \left|\left\{G^\prime \mid G^\prime \text{ is an induced subgraph of $G$ which is isomorphic to $H$}\right\}\right|.
    $$
\end{definition}

\begin{definition} \label{def:graph-family}
	We define three families of graphs as follows:
	\begin{itemize}
		\item Define $\mathcal{G}_n$ to be the family of all graphs with $n$ vertices.
		\item Define $\mathcal{F}_n$ to be the family of all forests with $n$ vertices.
		\item Define $\mathcal{T}_n$ to be the family of all trees with $n$ vertices.
	\end{itemize}
\end{definition}

\begin{definition} \label{lambda-graphs}
Let $\lambda = (\lambda_1, \lambda_2, \ldots, \lambda_\ell)$ be a partition of $n$. The following are some notable graphs constructed based on this partition, which will be used frequently throughout this paper:
     \begin{itemize}
     	\item We define $P_\lambda$ as $\bigsqcup_{i = 1}^\ell P_i$, where each $P_i$ is a path with $\lambda_i$ vertices.
     	\item We define $ST_\lambda$ as $\bigsqcup_{i = 1}^\ell ST_i$, where each $ST_i$ is a star with $\lambda_i$ vertices.
     	\item We define $K_\lambda$ as a complete $\ell$-partite graph where the $i$-th part has $\lambda_i$ vertices.
    \end{itemize}
\end{definition}


\section{Steps, Routes and Marchs} \label{section:main-1}

We start this section by defining our main operation on graphs and will prove some results. Using them, in Subsection \ref{section:main-1-1}, we first prove the chromatic-basis theorem shown by Cho and van Willigenburg~\cite{cho2015chromatic}. Then, we prove an interesting version of the DNC-relation and will show that the algorithm provided in~\cite{aliste2023marked} can be viewed as an interesting case of our method. In Subsection \ref{section:main-1-2}, we prove one of the main results of this paper, which gives us the equivalency of the CSF and the $U$-polynomial on trees as a result.

\begin{definition}
	Let $G_1$ be a simple graph where $v_1, v_2$ and $v_3$ are three of its vertices such that $v_1, v_2$ and $v_1, v_3$ are adjacent, but $v_2, v_3$ are not. We achieve three new graphs by modifying $G_1$:
	\begin{itemize}
		\item[\texttt{mod1}] \label{mod1} We obtain graph $G_2$ by removing the edge $v_1, v_3$ and adding the edge $v_2, v_3$.
		\item[\texttt{mod2}] \label{mod2} We obtain graph $P_1$  by removing the edge $v_1, v_2$.
		\item[\texttt{mod3}] \label{mod3} We obtain graph $N_1$ by removing the edges $v_1, v_2$ and $v_1, v_3$, and adding the edge $v_2, v_3$.
	\end{itemize}
	Using these modifications and the resulting graphs, we define the following concepts:
	\begin{enumerate}[label=\alph*)]
		\item Define the \emph{step}$(G_1\to G_2)$ to be the ordered pair $(P_1, N_1)$.
		\item $P_1$ and $N_1$ are called the \emph{positive remainder} and the \emph{negative remainder} of the march respectively. Also, the set $Re=\left\{P_1, N_1\right\}$ is called the \emph{remainder set} of the step. Note that remainders have one less edge than $G_1$ and $G_2$.
		\item The graph $G_1$ is called \emph{stepable} to $G_2$, if there exist three vertices in $G_1$ with the above condition such that performing \texttt{mod1} on these vertices results in $G_2$. Note that this relation is symmetric, meaning that if  $G_1$ is stepable to $G_2$, then so is $G_2$ to $G_1$.
		\item We call a sequence of graphs $R=(G_1, G_2, \ldots, G_{k-1}, G_k)$ a \emph{route} from $G_1$ to $G_k$, if for each $1\leq i<k$, the graph $G_i$ is stepable to $G_{i + 1}$. Note that if $R=(G_1, G_2, \ldots, G_{k-1}, G_k)$ is a route from $G_1$ to $G_k$, then $R^{-1}=(G_k, G_{k-1}, \ldots, G_{2}, G_1)$ is a route from $G_k$ to $G_1$.
		\item Given two routes $R_1=(G_1, G_2, \ldots, G_{k-1}, G_k)$ and $R_2=(H_1, H_2, \ldots, H_{t-1}, H_t)$ such that $G_k = H_1$, we can concatenate these two and achieve $R_1 \concat R_2=(G_1, G_2, \ldots, G_{k-1}, G_k=H_1, H_2, \ldots, H_{t-1}, H_t)$, which is a route from $G_1$ to $H_t$.
		\item Let $R=(G_1, G_2, \ldots, G_{k-1}, G_k)$ be a route from a graph $G_1$ to a graph $G_k$, and for each $1\leq i<k$, we have $\text{step}(G_i\to G_{i+1})=(P_i,N_i)$. A \emph{march}$(G_1\to G_k)$ with respect to $R$  is an ordered pair $(P,N)$ such that $P=(P_1, P_2,\ldots, P_{k-1})$ and $N=(N_1, N_2,\ldots, N_{k-1})$.
		\item The sequences $P$ and $N$ are called the \emph{positive remainders} and the \emph{negative remainders} of the mentioned march respectively.
	\end{enumerate}
\end{definition}

Note that by Corollary \ref{cor:triless-rec-formula} for a $\text{step}(G_1\to G_2)=(P_1,N_1)$ we have:
\begin{equation} \label{step-form}
	X_{G_1} = X_{G_2} + X_{P_1} -  X_{N_1}.
\end{equation}
Now, consider a $\text{march}(G_1 \to G_k) = (P, N)$ along a route $R = (G_1, G_2, \ldots, G_{k-1}, G_k)$, where for each $1 \leq i < k$, we have $\text{step}(G_i \to G_{i+1}) = (P_i, N_i)$. Applying step formula (Equation \eqref{step-form}) iteratively, we obtain:
\begin{equation} \label{march-form}
	X_{G_1} = X_{G_k} + \sum_{i=1}^{k-1}X_{P_i} -  \sum_{i=1}^{k-1}X_{N_i}.
\end{equation}

Equation \eqref{march-form} is the core relation we will use in this section. It has two essential properties. Firstly, note that the only graph with the same number of edges as $G_1$ is $X_{G_k}$. All other graphs have precisely one less edge. Secondly, the two sigmas on the right-hand side of the equation have the same number of graphs, and there is a bijection between them.

Our first goal in this section is to show that for forests $F_1$ and $F_k$, if one starts from $F_1$ and performs the step function enough times in a proper route, then they will obtain $F_k$ as a result if and only if $\text{Part}(F_1)=\text{Part}(F_k)$. To prove this, we use the path graphs as a go-between. In the following two lemmas, we show that if the above condition is satisfied, one can find a suitable route from a forest $F$ to its corresponding path graph.

\begin{lemma} \label{lem:tree-path}
	Given a tree $T_1$ with $n$ vertices, there is a route $R=(T_1, T_2, \ldots, T_k)$ from $T_1$ to $T_k$, where $T_k$ is the path graph $P_n$.
\end{lemma}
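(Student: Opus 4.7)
The plan is to induct on the number of leaves of $T_1$. If $T_1$ has exactly two leaves then every internal vertex must have degree exactly $2$ (the degree sum equals $2(n-1)$ and every non-leaf contributes at least $2$), so $T_1$ is already isomorphic to $P_n$ and the trivial one-term route suffices. For the inductive step, I would build a route from $T_1$ to some tree $T_1'$ that has strictly fewer leaves; concatenating this with the route supplied by the induction hypothesis on $T_1'$ produces a route from $T_1$ to $P_n$.

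To construct the reducing route, I would fix any leaf $\ell$ of $T_1$ and walk from $\ell$ along edges, passing only through vertices of degree $2$, until reaching a vertex of higher degree. Because $T_1 \neq P_n$, the walk cannot exhaust the tree and must terminate at a vertex $u_k$ with $\deg(u_k) \geq 3$; denote the walk as $\ell = u_0, u_1, \ldots, u_k$. I would then pick any neighbor $w$ of $u_k$ with $w \neq u_{k-1}$ and perform $k$ consecutive steps that slide the attachment point of $w$ one vertex closer to $\ell$ at a time. The $j$-th step uses $v_1 = u_{k-j+1}$, $v_2 = u_{k-j}$, $v_3 = w$, replacing the current edge $u_{k-j+1} w$ by the new edge $u_{k-j} w$. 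After the $k$-th step the edge $\ell w$ is present, so $\ell$ has degree $2$ and is no longer a leaf.

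A short accounting shows no other vertex changes its leaf status: $\deg(u_k)$ drops by exactly $1$ (and remains $\geq 2$), each intermediate $u_i$ for $1 \leq i \leq k-1$ is touched twice with its degree going $2 \to 3 \to 2$, and $\deg(w)$ is unchanged. Thus the resulting $T_1'$ is a tree with one fewer leaf than $T_1$, completing the induction.

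The main obstacle is verifying the preconditions of each of the $k$ steps, i.e.\ that immediately before the $j$-th slide we have $u_{k-j+1}$ adjacent to both $u_{k-j}$ and $w$, and $u_{k-j}$ not adjacent to $w$. The first adjacency holds because the original path edges $u_{i-1} u_i$ are never modified by any slide; the second holds because the $(j-1)$-th slide (or, when $j=1$, the original tree) leaves $w$ attached to $u_{k-j+1}$; the non-adjacency holds because the only new edges created so far are of the form $u_{k-i+1} w$ with $i < j$, so the edge $u_{k-j} w$ has not yet been introduced. A small preliminary point to dispense with is that the walk from $\ell$ really does hit a vertex of degree $\geq 3$: otherwise it would pass only through degree $\leq 2$ vertices and terminate at another leaf, and connectivity of $T_1$ would then force $T_1$ itself to be that path, contradicting $T_1 \neq P_n$.
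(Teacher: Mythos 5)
Your proof is correct, and it takes a genuinely different route from the paper's. The paper inducts on the length of a longest path in $T_1$: it picks a vertex $v_1$ of degree $\geq 3$ on a longest path $L$, a neighbor $v_2$ of $v_1$ on $L$, a neighbor $v_3$ of $v_1$ off $L$, performs a single \texttt{mod1} step, and asserts that this yields a tree whose longest path is strictly longer. You instead induct on the number of leaves, sliding an entire branch from the nearest branching vertex $u_k$ all the way down to a chosen leaf $\ell$ via $k$ consecutive steps, so that $\ell$ becomes interior and the leaf count drops by exactly one. Your argument does more work per inductive step (a $k$-step sub-route rather than a single step), but in return your invariant visibly decreases and each step's precondition is checked locally by explicit degree and adjacency bookkeeping. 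The paper's argument, as written, does not actually establish that its invariant strictly increases for an arbitrary admissible choice of $v_2$: in the tree given by the path $a$--$x$--$c$--$d$--$b$ together with a pendant $f$ at $c$, taking $v_1=c$, $v_2=x$, $v_3=f$ moves $f$ onto $x$ but leaves the longest-path length at $5$. So your proof is not merely an alternative; it is the more carefully justified of the two.
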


\begin{proof}
    We proceed by induction on the longest path length in $T_1$. For the base case, consider $P_n$, which has the longest possible path length. In this case, let $T_2$ be one of the graphs that are stepable from $P_n$; then the route $R = (T_1 = P_n, T_2, T_3 = P_n)$ is a valid route from $P_n$ to $P_n$.

    Now, assume that $T_1$ is not a path graph. Let $L$ be one of its longest paths, and let $v_1$ be a vertex on $L$ with $\deg(v_1) > 2$. Such a vertex must exist, as $T_1$ is not a path graph. Let $v_2$ and $v_3$ be vertices adjacent to $v_1$ such that $v_2$ lies on $L$ while $v_3$ does not. Applying \texttt{mod1} produces a new graph $T_2$ in which $L$ has been extended to a longer path $L^{\prime}$, now the longest path in $T_2$.

    By the induction hypothesis, a route $R^{\prime} = (T_2, T_3, \ldots, T_k = P_n)$ exists from $T_2$ to $P_n$. Since $T_1$ is stepable to $T_2$, we can extend $R^{\prime}$ to form $R = (T_1, T_2, \ldots, T_k = P_n)$, establishing a valid route from $T_1$ to $P_n$.
\end{proof}

\begin{lemma} \label{lem:forest-path}
	Given a forest $F_1$ with $\text{Part}(F_1)=\lambda$, there is a route $R=(F_1, F_2, \ldots, F_k)$ from $F_1$ to $F_k$, where $F_k$ is the graph $P_\lambda$.
\end{lemma}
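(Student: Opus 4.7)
The plan is to reduce to Lemma \ref{lem:tree-path} componentwise. Write $F_1 = T_1 \sqcup T_2 \sqcup \cdots \sqcup T_\ell$, where each $T_i$ is a tree on $\lambda_i$ vertices, so by the definition of $P_\lambda$ we have $P_\lambda = P_{\lambda_1} \sqcup \cdots \sqcup P_{\lambda_\ell}$. It therefore suffices to transform each tree component of $F_1$ into the corresponding path, one component at a time.

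The crucial observation is \emph{locality}: a step involves three vertices $v_1, v_2, v_3$ with both $v_1 v_2$ and $v_1 v_3$ being edges, so these three vertices must lie in the same connected component. Because \texttt{mod1} only changes edges among $\{v_1, v_2, v_3\}$, a step performed inside one tree component of a forest leaves every other component untouched. Hence any route that lives entirely inside a single tree component can be executed on the ambient forest without disturbing the rest.

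Concretely, for each $i$ Lemma \ref{lem:tree-path} supplies a route $R_i = (T_i^{(0)} = T_i, T_i^{(1)}, \ldots, T_i^{(k_i)} = P_{\lambda_i})$. Let $\widetilde{R}_i$ be the \emph{lifted} route obtained from $R_i$ by replacing each $T_i^{(j)}$ with the disjoint union
$$
P_{\lambda_1} \sqcup \cdots \sqcup P_{\lambda_{i-1}} \sqcup T_i^{(j)} \sqcup T_{i+1} \sqcup \cdots \sqcup T_\ell,
$$
i.e.\ by gluing back the components that are already paths on the left of $T_i^{(j)}$ and the untouched components on the right. By the locality observation $\widetilde{R}_i$ is a valid route, and the concatenation $\widetilde{R}_1 \concat \widetilde{R}_2 \concat \cdots \concat \widetilde{R}_\ell$ is a route from $F_1$ to $P_\lambda$.

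No real obstacle stands in the way: the whole argument rests on the fact that steps act locally, combined with the tree case already in hand. The only edge case is an isolated-vertex component ($\lambda_i = 1$), which is already equal to $P_{\lambda_i}$ and contributes a trivial one-term segment to the concatenation.
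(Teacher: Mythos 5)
Your proof is correct and follows essentially the same strategy as the paper's: reduce to Lemma \ref{lem:tree-path}, use the locality of \texttt{mod1} (a step changes only edges among three vertices in one component) to lift each component's route to a route on the full forest, and concatenate. You spell out the locality observation more explicitly than the paper does, and you process every component (with trivial routes for components that are already paths) rather than only the non-path components as the paper does, but these are cosmetic differences.
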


\begin{proof}
    To construct the route, assume $F_1 \neq P_\lambda$. Let $T_1, T_2, \ldots, T_t$ represent the connected components of $F_1$ that are not paths, and consider $T_1$, a tree with $q$ vertices. By Lemma \ref{lem:tree-path}, there exists a route $R{\prime} = (H_1, H_2, \ldots, H_t)$ from $H_1 = T_1$ to $H_t = P_q$. This route allows us to construct $R_1 = (F_1, F_2, \ldots, F_{t_1}, F_t)$, where each $F_i$ is derived from $F_1$ by replacing $T_1$ with $H_i$. Since each $H_i$ is stepable to $H_{i+1}$, so is $F_i$ to $F_{i+1}$, making $R_1$ a route from  $F_1$ to $H_t$. As $H_t$ includes one additional path component compared to $F_1$, we can repeat this process to generate routes $R_1, R_2, \ldots, R_{t-1}$, where for each $1 \leq i \leq t-2$, route $R_i$ is from $K_i$ to $K_{i+1}$, with $K_1 = F_1$ and $K_t = P_\lambda$. Notably, $K_{i+1}$ has one additional path component compared to $K_i$ due to the method we used. Concatenating these routes yields $R = R_1 \concat R_2 \concat \cdots \concat R_{t-2} \concat R_{t-1}$, forming a route from $K_1 = F_1$ to $K_t = P_\lambda$, as desired.
\end{proof}

In the preceding lemmas, stars could replace paths using a different routing method. Similarly to the previous approach, which increased the length of the longest path in the graph or a component, this method adjusts the maximum degree of the graph or a component, transforming them into star graphs. Consequently, we can derive a route from the initial graph to $ST_\lambda$. This is particularly interesting since by the end of this section it can be used as a routing method when you want to find the CSF in the chromatic-basis made by star graphs. This is interesting since it must give us the same values as the algorithm introduced in~\cite{aliste2023marked}, but it uses a different approach.

Now, we prove our desired statement by using the path graphs as a go-between.

\begin{proposition} \label{prop:forests-route}
	For any two forests $F_1$ and $F_k$, there exists a route $R=(F_1, F_2, \ldots, F_k)$ from $F_1$ to $F_k$ if and only if $\text{Part}(F_1)=\text{Part}(F_k)$.
\end{proposition}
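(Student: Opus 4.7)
The plan is to prove the two directions separately. The $(\Leftarrow)$ direction follows quickly from Lemma \ref{lem:forest-path}, while the $(\Rightarrow)$ direction reduces to a short structural observation about how \texttt{mod1} acts on a forest.

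For the $(\Leftarrow)$ direction, suppose $\text{Part}(F_1) = \text{Part}(F_k) = \lambda$. By Lemma \ref{lem:forest-path} applied to each of $F_1$ and $F_k$, there exist routes $R_1$ from $F_1$ to $P_\lambda$ and $R_2$ from $F_k$ to $P_\lambda$. Reversing the latter yields $R_2^{-1}$, a route from $P_\lambda$ to $F_k$, and the concatenation $R_1 \concat R_2^{-1}$ is the desired route from $F_1$ to $F_k$.

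For the $(\Rightarrow)$ direction, I would induct on the length of the route, so the work reduces to showing that a single step starting from a forest $F$ preserves both the forest property and the partition function. Let $v_1, v_2, v_3$ be the vertices at which \texttt{mod1} is applied, so $v_1 v_2, v_1 v_3 \in E(F)$; the hypothesis $v_2 v_3 \notin E(F)$ is automatic, since otherwise $v_1 v_2 v_3$ would be a triangle, contradicting that $F$ is a forest. Let $T$ be the connected component of $F$ containing $v_1$; it is a tree. Removing the edge $v_1 v_3$ from $T$ splits it into exactly two subtrees, $T_a$ containing $\{v_1, v_2\}$ and $T_b$ containing $v_3$. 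Adding the edge $v_2 v_3$ reconnects $T_a$ and $T_b$ into a single tree on the same vertex set. Hence the modified graph $F'$ is again a forest whose multiset of component sizes agrees with that of $F$, so $\text{Part}(F') = \text{Part}(F)$. Iterating along the route gives $\text{Part}(F_1) = \text{Part}(F_k)$.

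The main obstacle is this structural verification in the $(\Rightarrow)$ direction — confirming that \texttt{mod1} removes $v_1 v_3$ cleanly enough to split $T$ into exactly two pieces with $v_2$ and $v_3$ on opposite sides, so that adding $v_2 v_3$ glues them back together on the same vertex set. Everything else is assembly from Lemma \ref{lem:forest-path} and bookkeeping along the route.
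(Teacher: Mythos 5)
Your proof is correct and follows essentially the same approach as the paper: the $(\Leftarrow)$ direction is identical (route through $P_\lambda$ via Lemma~\ref{lem:forest-path} and concatenate with a reversal), and your $(\Rightarrow)$ direction is a more detailed, forest-specific version of the paper's one-line observation that a step preserves the partition function. The paper's observation is in fact valid for arbitrary graphs (since $v_1v_2$ remains present, removing $v_1v_3$ and adding $v_2v_3$ cannot change the vertex set of any connected component), but for the proposition at hand your restriction to forests costs nothing, and it has the side benefit of showing that every intermediate graph along the route is itself a forest.
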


\begin{proof}
    First, observe that since each step between graphs preserves both the size and the number of connected components, the partition function of the graph remains unchanged. Thus, two graphs with different partition functions cannot have a route. For the converse, by Lemma \ref{lem:forest-path}, there exist routes $R_1 = (H_1 = F_1, H_2, \ldots, H_t = P_\lambda)$ and $R_2 = (K_1 = F_k, K_2, \ldots, K_s = P_\lambda)$, where $\lambda = \text{Part}(F_1) = \text{Part}(F_k)$. Hence, $R_1 \concat R_2^{-1} = (H_1 = F_1, H_2, \ldots, H_t = P_\lambda = K_s, \ldots, K_2, K_1 = F_k)$ is a route from $ F_1$ to $ F_k$.
\end{proof}

From this point forward in this section, our primary approach is to relativize the CSF of a given graph to other graphs using the above proposition and Equation \eqref{march-form}.

In the first subsection, as applications, we use the above tools to prove two already-known facts using combinatorial methods. The first is \cite[Theorem 5]{cho2015chromatic}, which is the theorem that first introduced the idea of a chromatic-basis, and the second is the algorithm proposed by~\cite{aliste2023marked} for calculating the CSF in the star-basis using DNC-relation. We will show that this algorithm can be viewed as an interesting case of our method

In the second subsection, using the same method, we prove one of our main theorems, and from this theorem, we will infer an already known fact, which is $U$-polynomial and the CSF are equivalent for forests.

\subsection{Chromatic-bases and the DNC-Relation} \label{section:main-1-1}

\begin{definition} \label{def:forest-basis}
    Given an integer $n$, we assign a graph $G_\lambda$ to each $\lambda \vdash n$ such that $\text{Part}(G_\lambda)=\lambda$. We define a \emph{chromatic-basis} with respect to this assignment to be:
    $$
        B \triangleq \left\{ X_{G_\lambda} \mid \lambda \vdash n\right\}.
    $$
    A \emph{forest-basis} is a chromatic-basis with all the graphs $G_\lambda$ being forests.
\end{definition}

We aim to prove that a forest-basis is a $\mathbb{Q}$-basis of $\Lambda^n$. We first prove that a forest-basis $B$ is a generator set for $\mathcal{V}(\mathcal{G}_n)$. Then, by proving that the dimension of $\mathcal{V}(\mathcal{G}_n)$ equals $p(n)$, we conclude that $\mathcal{V}(\mathcal{G}_n) = \Lambda^n$. Therefore, show that $B$ is a $\mathbb{Q}$-basis of $\Lambda^n$.

Our approach is to use induction. To prove the statement for a general graph, we first prove it for forests. This is relatively straightforward due to the properties of Equation \eqref{march-form}. After that, in \ref{prop:cycle-to-girth3}, we show that every non-forest graph has a route to a 'good' graph, and by 'good' graph, we mean a graph containing a cycle of length $3$. The 'good' graph allows us to use the relation of Theorem \ref{thm:tri-rec-formula} and turn the 'good' graph into three other graphs with fewer edges. From that point, the induction will do the rest.

\begin{lemma} \label{lem:forest-forest-gen}
    A forest-basis $B$ is a generator set for $\mathcal{V}(\mathcal{F}_n)$.
\end{lemma}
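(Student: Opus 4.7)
The plan is to induct on the number of edges $e(F)$ of a forest $F \in \mathcal{F}_n$, using the march formula \eqref{march-form} as the engine of the inductive step. The base case $e(F) = 0$ is immediate: $F$ is the empty graph on $n$ vertices with $\text{Part}(F) = (1^n)$, and since $B$ is a forest-basis, the assigned $G_{(1^n)}$ must also be the empty graph on $n$ vertices (the unique forest with that partition), so $X_F = X_{G_{(1^n)}} \in B \subseteq \Span B$.

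For the inductive step, suppose the claim holds for every forest with fewer than $m$ edges, and let $F \in \mathcal{F}_n$ satisfy $e(F) = m$ and $\text{Part}(F) = \lambda$. If $F \cong G_\lambda$ we are done; otherwise Proposition \ref{prop:forests-route} supplies a route $R = (F = F_1, F_2, \ldots, F_k = G_\lambda)$, and \eqref{march-form} gives
$$X_F = X_{G_\lambda} + \sum_{i=1}^{k-1} X_{P_i} - \sum_{i=1}^{k-1} X_{N_i},$$
with every remainder $P_i, N_i$ carrying exactly $m - 1$ edges. Once it is known that each remainder is itself a forest, the inductive hypothesis places $X_{P_i}, X_{N_i} \in \Span B$, and since $X_{G_\lambda} \in B$ we conclude $X_F \in \Span B$.

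The crux, and the main obstacle, is verifying that the step and march operations preserve the forest property, so that the inductive hypothesis genuinely applies to the remainders. I plan to check this directly. First, inspecting Lemma \ref{lem:tree-path} and Lemma \ref{lem:forest-path}, the route $R$ is built by iterated \texttt{mod1}'s: if $F_i$ is a forest then removing the edge $v_1v_3$ splits the tree component containing $v_1, v_2, v_3$ into two pieces (one containing $v_1, v_2$, the other containing $v_3$), which are reconnected by the new edge $v_2v_3$ without creating a cycle, so $F_{i+1}$ is a forest; inductively every $F_i$ along $R$ is a forest. Second, $P_i$ (from \texttt{mod2}) is $F_i$ minus one edge, hence a forest; and $N_i$ (from \texttt{mod3}) is obtained by first removing $v_1v_2$ and $v_1v_3$ from $F_i$ — splitting the relevant tree component into three subtrees containing $v_1$, $v_2$, $v_3$ respectively — and then adding $v_2v_3$ to merge the $v_2$-subtree with the $v_3$-subtree, creating no cycle. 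With these structural verifications in hand, the induction closes and the lemma follows.
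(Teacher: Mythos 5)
Your proof is correct and takes essentially the same approach as the paper: induction on the number of edges, routing to $G_\lambda \in B$ via Proposition \ref{prop:forests-route}, and applying the march formula \eqref{march-form} together with the inductive hypothesis to the remainders. The only difference is that you explicitly verify that \texttt{mod1}, \texttt{mod2}, and \texttt{mod3} preserve the forest property, a detail the paper asserts without elaboration; your argument for this is sound.
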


\begin{proof}
    Let $F$ be a forest with partition $\text{Part}(F) = \lambda$, where $\lambda \vdash n$. Consider the set $B$ to be an arbitrary forest-basis $\left\{X_{F_{\mu}} \mid \mu \vdash n\right\}$. We will prove the lemma by induction on the number of edges of the graph. If the graph has no edges, then it corresponds to the forest $F_{(1,1,\ldots,1)}$, which always is a member of $B$. Hence, the base case holds. For the inductive step, assume that for any forest with fewer edges than $F$, the forest lies in the span of the forest-basis $B$. Now, consider the forest $F_\lambda \in B$ with partition $\text{Part}(F_\lambda) = \lambda$. By Proposition \ref{prop:forests-route}, there exists a route $R = (F_1 = F, F_2, \ldots, F_k = F_\lambda)$ from $F$ to $F_\lambda$. This route can be described as $\text{march}(F \to F_\lambda) = (P, N)$, and using the march formula (Equation \eqref{march-form}), we have that:
    $$
        X_F = X_{F_\lambda} + \sum_{i=1}^{k-1} X_{P_i} - \sum_{i=1}^{k-1} X_{N_i},
    $$
    where $P_i$s and $N_i$s are forests with fewer edges than $F$. By the induction hypothesis, since $P_i$s and $N_i$s have fewer edges than $F$, they lie within the span of $B$. Additionally, $F_\lambda$ is itself an element of $B$. Therefore, we conclude that $F$ is generated by the forest-basis $B$.
\end{proof}

\begin{proposition} \label{prop:cycle-to-girth3}
    Let $G$ be a graph that contains a cycle. Then there exists a graph $G^{\prime}$ with girth, i.e. the length of its shortest cycle, equal to $3$, and a route from $G$ to $G^{\prime}$.
\end{proposition}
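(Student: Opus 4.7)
The plan is a straightforward induction on the girth $g$ of $G$, which is well-defined since $G$ contains a cycle. In the base case $g = 3$, the proposition holds with $G^{\prime} = G$ and the trivial length-one route $(G)$.

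For the inductive step, suppose $g \geq 4$ and let $C = u_1 u_2 \cdots u_g u_1$ be a shortest cycle of $G$. The key observation is that $C$ must be chordless: any chord $u_i u_j$ (with $u_i, u_j$ non-consecutive on $C$) would split $C$ into two cycles of length strictly less than $g$, contradicting minimality. In particular, $u_1 u_3 \notin E(G)$, so we may apply \texttt{mod1} with central vertex $v_1 = u_2$ and $v_2 = u_1$, $v_3 = u_3$, producing a graph $H$ in which the edge $u_2 u_3$ has been replaced by $u_1 u_3$. By construction, $G$ is stepable to $H$, and $H$ still contains the shortened cycle $u_1 u_3 u_4 \cdots u_g u_1$ of length $g - 1$, so the girth of $H$ is at most $g - 1$. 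The induction hypothesis then supplies a route from $H$ to some $G^{\prime}$ of girth $3$, and prepending $G$ to this route yields the desired route from $G$ to $G^{\prime}$.

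There is essentially no obstacle to this approach: the only point requiring care is the legality of the \texttt{mod1} application at the chosen triple, i.e., verifying that $u_1 u_3$ is not already an edge of $G$. This is exactly the standard fact that a shortest cycle of length at least $4$ has no chord. The procedure terminates after at most $g - 3$ iterations, at which point the girth has dropped to $3$.
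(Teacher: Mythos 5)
Your proof is correct. It is the same underlying idea as the paper's — use \texttt{mod1} steps to contract a long cycle to a triangle — but the formulations differ slightly, and yours is actually a bit more careful. The paper fixes a single cycle $C = v_1, \ldots, v_g$ of length $g > 3$ and writes down the entire route explicitly by pivoting at $v_2$: the graph $G_i$ removes $v_2 v_3$ and adds $v_2 v_{i+2}$, so the endpoint slides along $C$ until $G_{g-2}$ contains the triangle $v_1 v_2 v_g$. For this to consist of legal steps, the cycle must be chordless (otherwise some ``added'' edge is already present), a point the paper leaves implicit. You instead induct on the girth, take a \emph{shortest} cycle at each stage, and justify chordlessness as the standard no-chord property of a minimum-length cycle; you also dispatch the girth-$3$ base case explicitly. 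Your version therefore patches the only soft spot in the paper's argument, at the cost of producing the route recursively rather than in one closed-form sweep. Both are perfectly valid, and the routes obtained may differ (your shortest cycle can change between iterations), but either yields the required route to a girth-$3$ graph.
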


\begin{proof}
    Let $C = v_1, v_2, \ldots, v_g$ be a cycle of length $g > 3$ in $G$. For each $1 \leq i \leq g-2$, define $G_i$ as the graph obtained from $G$ by removing the edge $v_2 v_3$ and adding the edge $v_2 v_{i+2}$. Let us denote $G^\prime$ as the last of these graphs that is, $G^\prime = G_{g-2}$. Observe that for each $1 \leq i \leq g-3$, the graph $G_i$ is stepable to $G_{i+1}$, which implies that the sequence $R = (G_1 = G, G_2, \ldots, G_{g-2} = G^\prime)$ forms a route from $G$ to $G^\prime$. Since $G^\prime$'s girth equals $3$, this concludes the proof.
\end{proof}

\begin{lemma} \label{lem:forest-graph-gen}
	A forest-basis $B$ is a generator set for $\mathcal{V}(\mathcal{G}_n)$.
\end{lemma}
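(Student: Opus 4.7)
The plan is to induct on the number of edges of $G$, combining Lemma \ref{lem:forest-forest-gen} (the forest case) with Proposition \ref{prop:cycle-to-girth3} and Theorem \ref{thm:tri-rec-formula} to reduce the cyclic case to graphs with strictly fewer edges. The base case ($G$ edgeless) is immediate: $G$ is the forest $F_{(1,1,\ldots,1)}$, already covered by Lemma \ref{lem:forest-forest-gen}. For the inductive step, assume every graph in $\mathcal{G}_n$ with fewer than $|E(G)|$ edges lies in $\Span(B)$.

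If $G$ is a forest, Lemma \ref{lem:forest-forest-gen} finishes the case directly. Otherwise $G$ contains a cycle, so by Proposition \ref{prop:cycle-to-girth3} there is a route $R = (G = G_1, G_2, \ldots, G_k = G^\prime)$ from $G$ to some graph $G^\prime$ of girth $3$. The key bookkeeping observation is that every step preserves the number of edges (\texttt{mod1} removes one edge and adds one), so $|E(G^\prime)| = |E(G)|$, while each remainder $P_i, N_i$ in the associated march has exactly one fewer edge than $G$. Applying the march formula (Equation \eqref{march-form}) gives
$$
    X_G = X_{G^\prime} + \sum_{i=1}^{k-1} X_{P_i} - \sum_{i=1}^{k-1} X_{N_i},
$$
and the inductive hypothesis places every $X_{P_i}$ and $X_{N_i}$ in $\Span(B)$.

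It remains to handle $X_{G^\prime}$. Because $G^\prime$ has girth $3$, it contains three edges $e_1, e_2, e_3$ forming a triangle, so Theorem \ref{thm:tri-rec-formula} yields
$$
    X_{G^\prime} = X_{G^\prime_{2,3}} + X_{G^\prime_{1,3}} - X_{G^\prime_3},
$$
where $G^\prime_{2,3}$ and $G^\prime_{1,3}$ have $|E(G)| - 1$ edges and $G^\prime_3$ has $|E(G)| - 2$ edges. All three lie strictly below $|E(G)|$, so by the inductive hypothesis their CSFs belong to $\Span(B)$, and hence so does $X_{G^\prime}$. Combining this with the previous display shows $X_G \in \Span(B)$, completing the induction.

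The argument is essentially an orchestration of tools already assembled, so there is no sharp obstacle; the only point that must be checked carefully is the edge-count bookkeeping, namely that routing to a girth-$3$ graph $G^\prime$ is edge-count preserving (making remainders drop by one) while the subsequent triangle relation strictly reduces the edge count. Once this is in place, the two-stage reduction — first use routes to reach a triangle, then use the triangle relation to descend — drives the induction cleanly.
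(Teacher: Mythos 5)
Your proof is correct and follows essentially the same two-stage reduction as the paper: route a cyclic graph to one of girth $3$ (edge-count-preserving, with remainders dropping by one edge), then apply the triangle relation of Theorem~\ref{thm:tri-rec-formula} to descend strictly in edge count. The only cosmetic difference is that the paper explicitly case-splits on girth $g=3$ versus $g>3$, whereas you fold both into a single pass through Proposition~\ref{prop:cycle-to-girth3} followed by the triangle relation; the substance and the bookkeeping are identical.
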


\begin{proof}
    Let $G$ be a graph with partition $\text{Part}(G) = \lambda$, where $\lambda \vdash n$. If $G$ is a forest, the result follows directly from Lemma \ref{lem:forest-forest-gen}. Therefore, it suffices to prove the lemma for the case where $G$ contains at least one cycle.

    Just as with the previous lemma, this lemma will be proven by induction on the number of edges of the graph. Let $g$ denote the girth of $G$. If $g = 3$, then by Theorem \ref{thm:tri-rec-formula}, we can express $G$ in terms of graphs $G_{2,3}$, $G_{1,3}$, and $G_3$, each having fewer edges than $G$, such that:
    $$
        X_G = X_{G_{2,3}} + X_{G_{1,3}} - X_{G_3}.
    $$
    By the induction hypothesis, the graphs $G_{2,3}$, $G_{1,3}$, and $G_3$ are in the span of the forest-basis $B$. Thus, $G$ is also generated by $B$.
    
    Now, consider the case where $g > 3$. Using the result from Proposition \ref{prop:cycle-to-girth3}, there is a route from $G$ to a graph $G^\prime$ with a girth equal to $3$. This route can be described as $\text{march}(G \to G^\prime) = (P, N)$, and by using the march formula (Equation \eqref{march-form}), we obtain the following relation:
    $$
        X_G = X_{G^\prime} + \sum_{i=1}^{k-1} X_{P_i} - \sum_{i=1}^{k-1} X_{N_i}.
    $$
    By a similar argument to that used in the proof of Lemma \ref{lem:forest-forest-gen}, each $P_i$ and $N_i$ lies in the span of the forest-basis $B$. Moreover, since $G^\prime$ is a graph with girth $3$, it can also be generated by $B$, as shown earlier. Combining these results, we conclude that $G$ can be generated by the forest-basis $B$.
\end{proof}

\begin{lemma} \label{lem:dim-vec-set-graph}
    The dimension of the $\mathcal{V}(\mathcal{G}_n)$ equals $p(n)$.
\end{lemma}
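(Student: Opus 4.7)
The plan is to prove the equality by establishing matching upper and lower bounds on the dimension.

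The upper bound is essentially for free from the preceding results. By Lemma~\ref{lem:forest-graph-gen}, any forest-basis $B$ generates $\mathcal{V}(\mathcal{G}_n)$. Since a forest-basis contains exactly one element $X_{G_\lambda}$ for each partition $\lambda\vdash n$, we have $|B|=p(n)$, so $\dim \mathcal{V}(\mathcal{G}_n)\le p(n)$.

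For the lower bound I would exhibit $p(n)$ graphs in $\mathcal{G}_n$ whose CSFs are linearly independent inside $\Lambda^n$. The cleanest family is built from complete graphs. For each partition $\lambda=(\lambda_1,\dots,\lambda_\ell)\vdash n$, let $H_\lambda$ denote the vertex-disjoint union $K_{\lambda_1}\sqcup K_{\lambda_2}\sqcup\cdots\sqcup K_{\lambda_\ell}\in\mathcal{G}_n$. Two standard facts (both in Stanley's original paper~\cite{stanley1995symmetric}) give $X_{K_m}=m!\,e_m$ and $X_{G\sqcup H}=X_G\cdot X_H$; combined, they yield
$$
X_{H_\lambda}=\Bigl(\prod_{i=1}^{\ell}\lambda_i!\Bigr)\,e_{\lambda_1}e_{\lambda_2}\cdots e_{\lambda_\ell}=\Bigl(\prod_{i=1}^{\ell}\lambda_i!\Bigr)\,e_\lambda.
$$
Since $\{e_\lambda:\lambda\vdash n\}$ is a classical $\mathbb{Q}$-basis of $\Lambda^n$ and the scalars $\prod_i\lambda_i!$ are nonzero, the family $\{X_{H_\lambda}:\lambda\vdash n\}$ is linearly independent in $\mathcal{V}(\mathcal{G}_n)$. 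Hence $\dim \mathcal{V}(\mathcal{G}_n)\ge p(n)$, and combining with the upper bound finishes the proof.

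There is no substantive obstacle here: the only subtlety is a notational one, namely that the symbol $K_\lambda$ in Definition~\ref{lambda-graphs} already denotes a complete multipartite graph, so I would introduce a fresh symbol (such as $H_\lambda$) for the disjoint union of complete graphs to avoid conflict. Everything else is either a one-line invocation of Lemma~\ref{lem:forest-graph-gen} or a citation of the multiplicativity of $X$ under disjoint unions together with $X_{K_m}=m!\,e_m$.
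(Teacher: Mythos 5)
Your proof is correct, and it takes a genuinely different route from the paper for the lower bound. The paper postpones the proof to the end of Section~\ref{section:main-2} and derives it from Corollary~\ref{cor:lambda-mat-rank}: it considers the family of complete multipartite graphs $K_\lambda$ (Definition~\ref{lambda-graphs}), shows in Lemma~\ref{lem:c_lamda_of_k_lamda} that $c_{\lambda'}^{K_\lambda}\neq 0$ if and only if $\lambda'\le\lambda$, and then observes that ordering partitions by length makes the resulting $\lambda$-matrix upper triangular with $1$'s on the diagonal, hence of full rank $p(n)$. You instead take the disjoint unions $H_\lambda = K_{\lambda_1}\sqcup\cdots\sqcup K_{\lambda_\ell}$, invoke $X_{K_m}=m!\,e_m$ and multiplicativity of the CSF over disjoint unions, and land directly on nonzero scalar multiples of the $e_\lambda$, a standard basis of $\Lambda^n$. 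Your route is shorter and fully classical (it only cites two facts from Stanley's original paper and needs none of the $\lambda$-matrix machinery), whereas the paper's route is deliberately integrated into the apparatus of Section~\ref{section:main-2}, where the $\lambda$-matrix, Lemma~\ref{lem:c_lamda_of_k_lamda}, and Corollary~\ref{cor:lambda-mat-rank} are developed and needed for other purposes anyway. One small remark on your upper bound: invoking Lemma~\ref{lem:forest-graph-gen} is fine and non-circular, but it is overkill, since $\mathcal{V}(\mathcal{G}_n)\subseteq\Lambda^n$ and $\dim_{\mathbb{Q}}\Lambda^n=p(n)$ already gives $\dim\mathcal{V}(\mathcal{G}_n)\le p(n)$ with no appeal to the forest-basis machinery.
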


We postpone the proof of this lemma until the end of Section \ref{section:main-2}. There, we will provide a family of graphs whose CSFs are independent.

\begin{definition} \label{def:g-lambda}
    Let $\left\{G_k\right\}_{k \geq 1}$ be a set of connected graphs such that $G_k$ has $k$ vertices for each $k$, and let $\lambda = (\lambda_1, \ldots ,\lambda_\ell)$ be a partition. Then,
    $$
        G_\lambda \triangleq G_{\lambda_{1}} \bigsqcup \cdots \bigsqcup G_{\lambda_{\ell}}.
    $$
    That is, $G_\lambda$ is the graph whose connected components are $G_{\lambda_{1}}, \ldots, G_{\lambda_{\ell}}$.
\end{definition}

The following theorem is a weaker version of the theorem shown in~\cite{cho2015chromatic} since we are only proving the theorem for the case forests. Although our statement is weaker, but our proof is combinatorial and shows how these bases actually span to generate the space, while the proof provided in~\cite{cho2015chromatic} is algebraic and based on the independence of graphs CSFs.

\begin{theorem} \label{thm:vec-set-forest-base}
    Let $\left\{T_k\right\}_{k \geq 1}$ be a set of trees such that $T_k$ has $k$ vertices for each $k$. Then,
    $$
        \left\{ X_{T_\lambda} \mid \lambda \vdash n \right\}
    $$ 
    is a $\mathbb{Q}$-basis of $\Lambda^n$. Plus, we have that:
    $$
        \Lambda = \mathbb{Q}[X_{T_1}, X_{T_2}, \ldots],
    $$
    and the $X_{T_k}$s are algebraically independent over $\mathbb{Q}$.
\end{theorem}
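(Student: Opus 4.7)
The plan is to combine the preceding lemmas in a short dimension-counting argument for the basis statement, and then upgrade to the algebraic statements using multiplicativity of the CSF over disjoint unions. First, since each $T_k$ is a tree on $k$ vertices, the set $\mathcal{B}_n = \{X_{T_\lambda} : \lambda \vdash n\}$ is a forest-basis in the sense of Definition \ref{def:forest-basis}. By Lemma \ref{lem:forest-graph-gen}, $\mathcal{B}_n$ generates $\mathcal{V}(\mathcal{G}_n)$, and by Lemma \ref{lem:dim-vec-set-graph}, $\dim \mathcal{V}(\mathcal{G}_n) = p(n) = |\mathcal{B}_n|$, so $\mathcal{B}_n$ is already a basis of $\mathcal{V}(\mathcal{G}_n)$. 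Because every CSF of an $n$-vertex graph is a homogeneous symmetric function of degree $n$, we have $\mathcal{V}(\mathcal{G}_n) \subseteq \Lambda^n$, and comparing dimensions with $\dim \Lambda^n = p(n)$ forces equality. Hence $\mathcal{B}_n$ is a $\mathbb{Q}$-basis of $\Lambda^n$.

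For the algebraic part, the key observation is the multiplicativity identity $X_{H_1 \sqcup H_2} = X_{H_1} X_{H_2}$, which is immediate from Definition \ref{def:csf}. For $\lambda = (\lambda_1, \ldots, \lambda_\ell)$ this gives $X_{T_\lambda} = X_{T_{\lambda_1}} \cdots X_{T_{\lambda_\ell}}$, so the basis $\mathcal{B}_n$ consists precisely of the weighted monomials of total degree $n$ in the variables $X_{T_1}, X_{T_2}, \ldots$, where $X_{T_k}$ is assigned weight $k$. Taking the direct sum over $n$ shows that these monomials span $\Lambda$, giving $\Lambda = \mathbb{Q}[X_{T_1}, X_{T_2}, \ldots]$. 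For algebraic independence, suppose some $P(y_1, y_2, \ldots) \in \mathbb{Q}[y_1, y_2, \ldots]$ satisfies $P(X_{T_1}, X_{T_2}, \ldots) = 0$; assigning weight $k$ to $y_k$, decompose $P$ into weighted-homogeneous components $P_n$. Each $P_n(X_{T_1}, \ldots)$ lies in $\Lambda^n$, and since the graded pieces of $\Lambda$ are independent, each $P_n$ must vanish separately. But $P_n(X_{T_1}, \ldots)$ is an explicit $\mathbb{Q}$-linear combination of the basis elements of $\mathcal{B}_n$, so every coefficient of $P_n$ is zero, whence $P = 0$.

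There is no real obstacle once Lemmas \ref{lem:forest-graph-gen} and \ref{lem:dim-vec-set-graph} are in hand; the proof is essentially bookkeeping. The single point one must be careful about is to remember that the natural grading here is the \emph{weighted} grading with $\deg X_{T_k} = k$, which is what both ensures $\mathcal{V}(\mathcal{G}_n) \subseteq \Lambda^n$ and legitimises the homogeneous decomposition used to derive algebraic independence from linear independence of $\mathcal{B}_n$.
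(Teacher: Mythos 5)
Your proposal is correct and follows essentially the same route as the paper: invoke Lemma \ref{lem:forest-graph-gen} and Lemma \ref{lem:dim-vec-set-graph} to identify $\{X_{T_\lambda} : \lambda \vdash n\}$ as a basis of $\Lambda^n$, then use multiplicativity $X_{T_\lambda}=X_{T_{\lambda_1}}\cdots X_{T_{\lambda_\ell}}$ to obtain $\Lambda=\mathbb{Q}[X_{T_1},X_{T_2},\ldots]$ and algebraic independence. You merely make explicit two steps the paper leaves terse — the dimension comparison $\mathcal{V}(\mathcal{G}_n)\subseteq\Lambda^n$ with $\dim\Lambda^n=p(n)$, and the weighted-homogeneous decomposition deriving algebraic independence from the linear independence of the degree-$n$ monomials — which is sound and arguably a clearer write-up of the same argument.
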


\begin{proof}
    According to the Lemmas \ref{lem:forest-graph-gen} and \ref{lem:dim-vec-set-graph}, the set $ \left\{ X_{T_\lambda} \mid \lambda \vdash n \right\} $ is a $\mathbb{Q}$-basis for $\Lambda^n$. Therefore, since $X_{T_\lambda} = X_{T_{\lambda_{1}}}\cdot X_{T_{\lambda_{2}}} \cdot \cdots \cdot X_{T_{\lambda_{\ell}}}$, where $\lambda = (\lambda_1, \ldots, \lambda_\ell)$, we can deduct that:
    $$
        \Lambda = \mathbb{Q}[X_{T_1}, X_{T_2}, \ldots].
    $$
    Therefore, the $X_{T_k}$s are algebraically independent over $\mathbb{Q}$.
\end{proof}

The following definition and the two corollaries after that are handy tools that we will exploit in the next subsection.

\begin{definition} \label{def:int-coef}
    Let $B$ be a chromatic-basis defined in Definition \ref{def:forest-basis}. For a graph $G$ with $n$ vertices, we define $X_{G, B}$ as the \emph{CSF of $G$ expressed in the chromatic-basis $B$}. For $\lambda \vdash n$, we denote by $[\lambda] X_{G, B}$ the coefficient of $X_{G, B}$ with respect to $X_{G_\lambda} \in B$.
\end{definition}

The following corollaries are particularly important since they actually show how a forest $F$ with $\text{Part}(F)=\lambda$ can be a representative of $\lambda$ inside a chromatic-basis. Note that the forest-basis $B$ in these corollaries is unrelated to $F$ itself. So, not only $\lambda$ can represent a non-zero coefficient, but it represents a coefficient equal to one. In the next subsection, we will discuss the coefficients of the CSF shown in a forest-basis more thoroughly.

\begin{corollary} \label{cor:coef-zero}
    Let $G$ be a graph with $n$ vertices such that $\text{Part}(G)=\lambda$. If $[\mu]X_{G, B}\neq 0$, then $\mu\leq \lambda$.
\end{corollary}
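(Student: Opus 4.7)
The plan is to prove this by strong induction on $|E(G)|$, mirroring the case analysis of \ref{lem:forest-forest-gen}, \ref{prop:cycle-to-girth3}, and \ref{lem:forest-graph-gen}, while additionally tracking how the partition function behaves. The base case $|E(G)| = 0$ is immediate: then $\text{Part}(G) = (1, 1, \ldots, 1) = \lambda$ and the graph $G_{(1,1,\ldots,1)} \in B$ is forced to be the edgeless graph on $n$ vertices (it has $n$ singleton components), so $X_G = X_{G_\lambda}$, producing only the coefficient at $\mu = \lambda$.

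The crux of the argument is a \emph{partition-tracking lemma} that I would establish first: whenever $\text{step}(G_1 \to G_2) = (P_1, N_1)$, one has $\text{Part}(G_1) = \text{Part}(G_2)$ and $\text{Part}(P_1), \text{Part}(N_1) \leq \text{Part}(G_1)$. The equality holds because the step rewires edges inside the single component $C$ of $G_1$ that contains $v_1, v_2, v_3$, and these three vertices remain mutually reachable in $G_2$ via the edges $v_1 v_2$ and $v_2 v_3$ (and no edge touching a vertex outside $C$ is altered). For the remainders, $P_1$ just deletes an edge, which can only refine the partition; and for $N_1$, every modified edge stays inside $C$ and no new edge reaches outside $C$, so every $N_1$-component that meets $C$ lies entirely in $C$, hence $C$ is a disjoint union of $N_1$-components and $\text{Part}(N_1) \leq \text{Part}(G_1)$. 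Iterating along a route, every $P_i$ and $N_i$ appearing in any $\text{march}(G \to H) = (P, N)$ then satisfies $\text{Part}(P_i), \text{Part}(N_i) \leq \text{Part}(G)$. The analogue for \ref{thm:tri-rec-formula} is even easier: since $e_1, e_2$ lie in a triangle they are not bridges, so $\text{Part}(G_{2,3}) = \text{Part}(G_{1,3}) = \text{Part}(G)$, and $\text{Part}(G_3) \leq \text{Part}(G)$ because edge deletions can only refine the partition.

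With this lemma in hand, the three inductive cases carry through as in \ref{lem:forest-graph-gen}. If $G$ is a forest, \ref{prop:forests-route} produces a route from $G$ to $G_\lambda \in B$, and the march formula \eqref{march-form} rewrites $X_G$ as $X_{G_\lambda}$ plus a signed sum of $X_{P_i}$ and $X_{N_i}$; each $P_i, N_i$ has strictly fewer edges than $G$ and partition $\leq \lambda$, so the induction hypothesis forces all of their nonzero $B$-coefficients to sit at some $\mu \leq \lambda$, while $X_{G_\lambda}$ contributes only $\mu = \lambda$. If $G$ has girth $3$, \ref{thm:tri-rec-formula} writes $X_G$ as a combination of three graphs with strictly fewer edges and partition $\leq \lambda$, and induction closes the case. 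If $G$ contains a cycle but has girth $>3$, \ref{prop:cycle-to-girth3} routes $G$ to some $G'$ of girth $3$ with the same partition $\lambda$; applying the march formula and then the girth-$3$ case to $G'$ rewrites $X_G$ as a combination of $X_H$'s with $|E(H)| < |E(G)|$ and $\text{Part}(H) \leq \lambda$, and induction concludes. The only genuine hurdle is the partition-tracking lemma: everything else is essentially the scaffolding of \ref{lem:forest-graph-gen} with the inequality $\mu \leq \lambda$ recorded at every reduction step.
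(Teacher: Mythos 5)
Your proposal is correct and follows essentially the same inductive strategy as the paper's proof: induction on $|E(G)|$, splitting into a forest case (route to $F_\lambda \in B$ via Proposition \ref{prop:forests-route}) and a cycle case (route to a girth-$3$ graph via Proposition \ref{prop:cycle-to-girth3}, then apply Theorem \ref{thm:tri-rec-formula}), with Equation \eqref{march-form} driving both. The one place you go beyond the paper is in explicitly stating and justifying the partition-tracking lemma --- that steps preserve $\text{Part}$ and that $\text{Part}(P_i), \text{Part}(N_i), \text{Part}(G_j) \leq \lambda$ --- which the paper asserts without proof; your supplement genuinely fills that gap.
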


\begin{proof}
    We use induction on the number of edges of the graph. For the base case, consider an edgeless graph. We examine two cases:
    \begin{itemize}
        \item \textbf{$G$ is a forest:} In this case, using Proposition \ref{prop:forests-route} there exists a route from $G$ to $F_\lambda$, where $X_{F_\lambda}\in B$. Using the marching formula (Equation \eqref{march-form}) on $R$, we get:
        $$
            X_G = X_{F_\lambda} + \sum_{i=1}^{k-1}X_{P_i} -   \sum_{i=1}^{k-1}X_{N_i}.
        $$
        Firstly, note that $P_i$s and $N_i$s have fewer edges than $G$. Also, for all $1 \leq i \leq k-1$, we have $\text{Part}(P_i)\leq \lambda$ and $\text{Part}(N_i)\leq \lambda$. This implies that if $\mu \vdash n$ such that $[\mu]X_{G, B}\neq 0$ and $\mu \nleq \lambda$, then by the induction hypothesis, $[\mu]X_{P_i, B} = 0$ and $[\mu]X_{N_i, B} = 0$. Since $[\mu]X_{F_\lambda, B}\neq 0$ only if $\lambda = \mu$, we achieve the desired result.

        \item \textbf{$G$ has a cycle:} In this case, by Proposition \ref{prop:cycle-to-girth3}, there exist a graph $G^\prime$ and a route from $G$ to $G^\prime$ such that $G^\prime$ contains a triangle. By Theorem \ref{thm:tri-rec-formula}, there exist graphs $G_1$, $G_2$ and $G_3$ such that $X_{G^\prime} = X_{G_1} + X_{G_2} - X_{G_3}$. Applying Equation \eqref{march-form}, we have:
        $$
            X_{G,B} = X_{G^\prime, B} + \sum_{i=1}^{k-1}X_{P_i,B} -  \sum_{i=1}^{k-1}X_{N_i,B} = X_{G_1,B} + X_{G_2,B} - X_{G_3,B} + \sum_{i=1}^{k-1}X_{P_i,B} -  \sum_{i=1}^{k-1}X_{N_i,B}.
        $$
        Since all $P_i$s, $N_i$s, and $G_j$s have fewer edges than $G$, and we also have $\text{Part}(P_i)\leq \lambda$, $\text{Part}(N_i) \leq \lambda$ and $\text{Part}(G_j)\leq \lambda$, the result follows by the induction hypothesis.
    \end{itemize}
\end{proof}

\begin{corollary} \label{cor:coef-one}
    Let $F$ be a forest such that $\text{Part}(F) = \lambda$. Then $[\lambda]X_{F, B} = 1$.
\end{corollary}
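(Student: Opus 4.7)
The plan is to reduce the claim to Corollary \ref{cor:coef-zero} via a single application of the march formula. Since $\text{Part}(F) = \lambda = \text{Part}(G_\lambda)$, Proposition \ref{prop:forests-route} furnishes a route $R = (F_1 = F, F_2, \ldots, F_k = G_\lambda)$ from $F$ to the designated representative $G_\lambda \in B$; if $F = G_\lambda$ the claim is immediate, so assume $k > 1$. Because each step acts inside a single tree component by removing one bridge and adding another to reconnect the two pieces, every intermediate $F_i$ remains a forest with $\text{Part}(F_i) = \lambda$. Applying Equation \eqref{march-form} to $R$ gives
$$X_F = X_{G_\lambda} + \sum_{i=1}^{k-1} X_{P_i} - \sum_{i=1}^{k-1} X_{N_i}.$$

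Next, I would examine the partitions of the two families of remainders. The $i$-th step operates on three vertices $v_1, v_2, v_3$ lying in a single tree component of $F_i$ of some size $s$. The positive remainder $P_i$ is obtained by \texttt{mod2}, which deletes the bridge $v_1v_2$ and therefore splits the size-$s$ tree into two nonempty components; hence $\text{Part}(P_i)$ is \emph{strictly} finer than $\lambda$. The negative remainder $N_i$ is obtained by \texttt{mod3}, which deletes $v_1v_2$ and $v_1v_3$ and inserts $v_2v_3$. Tracking components shows that the size-$s$ tree breaks into two pieces --- the subtree containing $v_1$, together with the merger of the $v_2$- and $v_3$-subtrees through the new edge $v_2v_3$ --- so $\text{Part}(N_i)$ is also strictly finer than $\lambda$.

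Finally, Corollary \ref{cor:coef-zero} forces $[\mu]X_{P_i, B} = 0$ whenever $\mu \not\leq \text{Part}(P_i)$. Taking $\mu = \lambda$, this applies because $\text{Part}(P_i)$ is strictly finer than $\lambda$, so $\lambda \not\leq \text{Part}(P_i)$; the same reasoning gives $[\lambda]X_{N_i, B} = 0$. Since $G_\lambda \in B$ we have $[\lambda]X_{G_\lambda, B} = 1$, and reading off the $[\lambda]$-coefficient of the march formula yields
$$[\lambda]X_{F, B} = 1 + \sum_{i=1}^{k-1} 0 - \sum_{i=1}^{k-1} 0 = 1,$$
as desired.

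The only non-cosmetic step is the component analysis of \texttt{mod2} and \texttt{mod3} inside a tree, which establishes that both remainders strictly refine $\lambda$. Once this is in place, the corollary follows immediately from Corollary \ref{cor:coef-zero} and no further induction is required.
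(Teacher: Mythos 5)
Your proof is correct and follows essentially the same route as the paper: route $F$ to the basis representative via Proposition \ref{prop:forests-route}, expand with the march formula, observe that every positive and negative remainder has a partition strictly finer than $\lambda$ (equivalently, one additional part), and apply Corollary \ref{cor:coef-zero} to kill the $[\lambda]$-coefficient of each remainder. The only difference is presentational: the paper simply records $\ell(\text{Part}(P_i)) = \ell(\text{Part}(N_i)) = \ell(\lambda) + 1$ and invokes Corollary \ref{cor:coef-zero}, whereas you spell out the component analysis of \texttt{mod2} and \texttt{mod3} inside a tree to justify the strict refinement --- the same fact, made slightly more explicit.
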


\begin{proof}
    By Proposition \ref{prop:forests-route}, there exists a route $R=(F_1=F, F_2, \ldots, F_{k+1}=F_\lambda)$. Applying the marching formula (Equation \eqref{march-form}) to $R$, we obtain:
    $$
        X_F = X_{F_\lambda} + \sum_{i=1}^{k}X_{P_i} -  \sum_{i=1}^{k}X_{N_i}.
    $$
    Note that $\ell(\text{Part}(P_i))$ and $\ell(\text{Part}(N_i))$ are both equal to $\ell(\lambda) + 1$, for all $1 \leq i \leq k$. Therefore, by Corollary \ref{cor:coef-zero}, we conclude that $[\lambda] \left( \sum_{i=1}^{k} X_{P_i} - \sum_{i=1}^{k} X_{N_i} \right) = 0$. Thus, $[\lambda]X_{F, B} = [\lambda]X_{F_{\lambda, B}} = 1$, as desired.
\end{proof}

Now, for the last statement of this subsection, we prove an interesting version of the DNC-relation first shown in~\cite{aliste2023marked}. An important use of the DNC-relation is the results provided in~\cite{gonzalez2024chromatic} where they used the algorithm introduced in \cite{aliste2017trees}. This algorithm's objective is similar to what we have done so far in this section. It uses the relation to calculate the coefficients of the CSF of a graph in the star-basis, which is the chromatic-basis made by $ST_\lambda$s. In fact, this algorithm is so similar to our work that it can be viewed as an instance of it. In the following proposition, we show that the relation, in the case of the forests, can be viewed as a route. This means that the relation is actually a method of making routes. Routes that can connect any forest to the correlated $ST_\lambda$. So, in the case of forests, the mentioned algorithm is actually the method we introduced, but with a specified routing algorithm.

\begin{proposition} \label{prop:csf-dnc-relation}
    Let $G_1$ be a graph, and let $wu$ be one of its edges. Let $v_1, \ldots, v_k$ be the vertices adjacent to $u$ but not to $w$. Define $G_2$ as the graph obtained from $G_1$ by removing each edge $uv_i$ and adding the edge $wv_i$ for each $1 \leq i \leq k$. If we define $P^{\prime} = G_1 - \left\{wu\right\}$ and $N^{\prime} = G_2 - \left\{wu\right\}$, then:
    $$
        X_{G_1} = X_{G_2} + X_{P^\prime} - X_{N^\prime}.
    $$
\end{proposition}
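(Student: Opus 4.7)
The plan is to build a route from $G_1$ to $G_2$ by iteratively transferring the edges $uv_i$ to $wv_i$ one at a time, apply the march formula (Equation \eqref{march-form}) along this route, and observe that the resulting positive and negative remainders telescope. Concretely, I would set $H_0 = G_1$ and, for each $1 \leq i \leq k$, let $H_i = H_{i-1} - \{uv_i\} + \{wv_i\}$, obtained by applying \texttt{mod1} to the triple $(u, w, v_i)$; by construction $H_k = G_2$. To confirm this really is a route, I need to check at every stage that $uw$ and $uv_i$ are edges of $H_{i-1}$ while $wv_i$ is not. This is immediate: $uw$ is never touched; $uv_i$ is still present because only $uv_1, \ldots, uv_{i-1}$ have been removed and the $v_j$ are distinct; and $wv_i$ is still absent because, by hypothesis, it was not an edge of $G_1$ and only $wv_1, \ldots, wv_{i-1}$ have been added.

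Next I would read off the remainders of each step. Matching $(v_1, v_2, v_3) \leftrightarrow (u, w, v_i)$ with the notation in the definition of step, \texttt{mod2} removes $v_1 v_2 = uw$, while \texttt{mod3} removes $uw$ and $uv_i$ and adds $wv_i$. Hence $P_i = H_{i-1} - \{uw\}$, and since $H_i = H_{i-1} - \{uv_i\} + \{wv_i\}$, one gets the crucial identification $N_i = H_i - \{uw\}$. Plugging these into the march formula yields
\[
X_{G_1} \;=\; X_{G_2} \;+\; \sum_{i=1}^{k} \bigl( X_{H_{i-1} - \{uw\}} - X_{H_i - \{uw\}} \bigr),
\]
and the sum telescopes to $X_{H_0 - \{uw\}} - X_{H_k - \{uw\}} = X_{P^{\prime}} - X_{N^{\prime}}$, which is precisely the claimed identity.

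The only mildly delicate point is the legality check for \texttt{mod1} at each stage, but since all edges ever manipulated are of the form $uv_j$ or $wv_j$ for distinct $j$, this reduces to bookkeeping. The conceptual heart of the argument is the identification $N_i = H_i - \{uw\}$: because the negative remainder at step $i$ agrees with the $(i{+}1)$-th graph of the route with $uw$ deleted, the otherwise unwieldy double sum in the march formula collapses into a single telescoping difference, and the DNC-style relation drops out at once.
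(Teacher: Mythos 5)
Your proof is correct and follows essentially the same approach as the paper: build the route $H_0=G_1,\ldots,H_k=G_2$ by transferring the edges $uv_i$ to $wv_i$ one at a time, apply the march formula, and observe that the remainders telescope since the negative remainder of step $i$ equals the positive remainder of step $i+1$. Your version is slightly more explicit in verifying the legality of each \texttt{mod1} application, but the route, the key identification $N_i = P_{i+1}$, and the telescoping collapse are identical to the paper's argument.
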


\begin{proof}
    For each $0 \leq i \leq k$, define the graph $H_i$ as the graph obtained from $G_1$ by removing the edges $uv_1, \ldots, uv_i$ and adding the edges $wv_1, \ldots, wv_i$. Note that by definition, $H_0 = G_1$ and $H_k = G_2$. Since $H_i$ is stepable to $H_{i+1}$ for each $0 \leq i \leq k - 1$, the sequence $R = (H_0 = G_1, H_1, \ldots, H_k = G_2)$ forms a route from $G_1$ to $G_2$. Describing this route as $\text{march}(G_1 \to G_2) = (P, N)$, the marching formula (Equation \eqref{march-form}) gives:
    $$
        X_{G_1} = X_{G_2} + \sum_{i = 0}^{k-1} X_{P_i} - \sum_{i = 0}^{k-1} X_{N_i}.
    $$
    Observing that $N_i = P_{i+1}$ for each $0 \leq i \leq k - 2$, and noting $P_0 = P^\prime$ and $N_{k-1} = N^\prime$, we conclude:
    $$
        X_{G_1} = X_{G_2} + X_{P^\prime} - X_{N^\prime},
    $$
    as required.
\end{proof}

As shown in the proof, the relation is equivalent to finding a route between $G_1$ and $G_2$, and by considering the march of this route and applying Equation \eqref{march-form} we will end up with the desired relation. Note that, in the case of forests, none of the neighbors of $u$ can be adjacent to $w$, except for $w$ itself. So, in this case, the above relation will be the same as the DNC-relation. Note that how this version is faster in the sense that it avoids a lot of calculations because of the cancellations. This brings up a new question. Where else can we make such cancellations to make the final relation simpler? 


\subsection{CSF, U-Polynomial and the Tree Conjecture} \label{section:main-1-2}

In this subsection, our aim is to prove Theorem \ref{thm:U-equiv}. This theorem shows, in the case of forests, how much the $U$-polynomial behaves like the CSF when viewed in the proper chromatic-basis. To prove the theorem, we first show that the relation of Equation \eqref{step-form} is still true if we replace the CSF with the $U$-polynomial. This is particularly important since our work in this section is almost entirely based on this relation. So, having this relation be true for $U$-polynomials means that we can import most of our tools from the CSF to the $U$-polynomial.

\begin{lemma} \label{lem:upoly-relation}
    Let $F_1$ be a forest where $v_1, v_2$, and $v_3$ are three of its vertices such that $v_1, v_2$ and $v_1, v_3$ are adjacent, but $v_2, v_3$ are not. Let $F_2$, $P_1$, and $N_1$ be the forests obtained by performing \texttt{mod1}, \texttt{mod2}, and \texttt{mod3} operations on $F_1$. Then we have:
    $$
        U_{F_1} = U_{F_2} + U_{P_1} - U_{N_1}.
    $$
\end{lemma}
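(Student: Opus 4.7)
The plan is to give a combinatorial proof by partitioning the edge subsets appearing in each $U$-polynomial according to their intersection with a common ``base'' set, and then verifying the identity contribution-by-contribution. Set $e_1 \triangleq v_1v_2$, $e_2 \triangleq v_1v_3$, $e_3 \triangleq v_2v_3$, and let $E_0 \triangleq E(F_1) \setminus \{e_1, e_2\}$. First I would verify that $F_2$, $P_1$, $N_1$ are all forests; this follows from the observation that in the forest $F_1$ the unique path between $v_2$ and $v_3$ passes through $v_1$, so deleting $e_1$ or $e_2$ disconnects $v_2$ from $v_3$ and therefore adjoining $e_3$ cannot create a cycle. Remark \ref{def:u-poly-forest} then lets me write $U_H = \sum_{A \subseteq E(H)} x_{\lambda(A)}$ for each $H \in \{F_1, F_2, P_1, N_1\}$. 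Since $E(F_1) = E_0 \sqcup \{e_1, e_2\}$, $E(F_2) = E_0 \sqcup \{e_1, e_3\}$, $E(P_1) = E_0 \sqcup \{e_2\}$, and $E(N_1) = E_0 \sqcup \{e_3\}$, each $U$-polynomial decomposes as a double sum indexed first by $B \subseteq E_0$ and then by the choice of which among the ``special'' edges of $\{e_1, e_2, e_3\}$ available in that graph to include.

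Next I would fix $B \subseteq E_0$ and note that, since $F_1$ is a forest, the vertices $v_1, v_2, v_3$ lie in three distinct connected components of $(V(F_1), E_0)$, and hence in three distinct components of the spanning subgraph $(V(F_1), B)$ for every $B \subseteq E_0$. Denote these components by $\alpha, \beta, \gamma$, containing $v_1, v_2, v_3$ respectively, with sizes $p, q, r$, and let $\tau$ be the partition formed by the sizes of the remaining components. Adding $e_1$ to $B$ merges $\alpha$ with $\beta$; adding $e_2$ merges $\alpha$ with $\gamma$; adding $e_3$ merges $\beta$ with $\gamma$; and the two-edge cases cascade accordingly. Working out the contributions from subsets whose $E_0$-part equals $B$ produces for each graph a short sum of monomials drawn from $\{x_{(p,q,r)\cup\tau}, x_{(p+q,r)\cup\tau}, x_{(p+r,q)\cup\tau}, x_{(p,q+r)\cup\tau}, x_{(p+q+r)\cup\tau}\}$, and the signed combination $U_{F_1}|_B - U_{F_2}|_B - U_{P_1}|_B + U_{N_1}|_B$ collapses to zero by direct inspection. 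Summing over all $B \subseteq E_0$ then yields the lemma.

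The main obstacle, mild as it is, is the bookkeeping: one must justify that $\alpha, \beta, \gamma$ really are three distinct components for every base $B \subseteq E_0$ (which uses only that $F_1 - \{e_1, e_2\}$ is still a forest) and then correctly track which pair of components is merged by which special edge across the four cases on the $F_1$-side, the four on the $F_2$-side, and the two each on the $P_1$- and $N_1$-sides. Beyond that, the identity reduces to cancellation of matching monomials, so no new ideas beyond careful accounting are required. This mirrors, in the $U$-polynomial setting, the step relation \eqref{step-form} used throughout Section \ref{section:main-1} for the CSF, which is the whole point of establishing it.
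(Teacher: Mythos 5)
Your proposal is correct. The key structural fact you isolate — that in the spanning subgraph $(V(F_1),B)$ for any $B\subseteq E_0$ the three vertices $v_1,v_2,v_3$ lie in distinct components — holds exactly because $F_1$ is a forest (the unique $v_2$--$v_3$ path goes through $v_1$), and once you have it, the per-$B$ contributions are
\[
U_{F_1}\big|_B = a+b+c+e,\quad U_{F_2}\big|_B = a+b+d+e,\quad U_{P_1}\big|_B = a+c,\quad U_{N_1}\big|_B = a+d,
\]
where $a,b,c,d,e$ are the monomials for the partitions $(p,q,r)\cup\tau$, $(p+q,r)\cup\tau$, $(p+r,q)\cup\tau$, $(p,q+r)\cup\tau$, $(p+q+r)\cup\tau$, and the alternating sum vanishes. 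Summing over $B$ gives the lemma.

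This is organized differently from the paper's proof, which does not fix $B$ and enumerate. Instead, the paper defines a bijection $A\mapsto A'$ from $2^{E(F_1)}$ to $2^{E(F_2)}$ (by swapping $e_2$ for $e_3$ when present), then splits on whether $e_1\in A$. When $e_1\in A$ the bijection preserves $x_{\lambda(\cdot)}$, so those terms of $U_{F_1}-U_{F_2}$ cancel outright; when $e_1\notin A$ the remaining subsets of $E(F_1)$ and $E(F_2)$ are exactly the subsets of $E(P_1)$ and $E(N_1)$, giving $U_{P_1}-U_{N_1}$ directly without any component-size bookkeeping. Your version makes the role of the forest hypothesis more transparent (one sees exactly which five partition types can occur and why they cancel), at the cost of tracking $p,q,r,\tau$; the paper's version is shorter because the bijection absorbs that bookkeeping, but it invokes the forest structure more implicitly (in the claim that the components involving $v_1,v_2,v_3$ match up under $A\mapsto A'$). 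Either is a complete proof.
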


\begin{proof}
    Let $e_1 = v_1 v_2$, $e_2 = v_1 v_3$, and $e_3 =  v_2 v_3$. For each $A \subseteq E(F_1)$, define $A^{\prime} \subseteq E(F_2)$ as follows:
    \begin{itemize}
        \item if $e_2 \not\in A$, then $A^\prime = A$,
        \item if $e_2 \in A$, then $A^\prime = \left(A - \left\{e_2\right\}\right) \cup \left\{e_3\right\}$.
    \end{itemize}
    Note that for each $A \subseteq E(F_1)$, there is exactly one corresponding $A^\prime \subseteq E(F_2)$.
    By definition, $U_F(x) = \sum_{A \subseteq E(F)} x_{\lambda (A)}$. For each $A$, we aim to enumerate and compare $x_{\lambda(A)}$ in $[\lambda(A)] U_{F_1}$ and $x_{\lambda(A^\prime)}$ in $[\lambda(A^\prime)] U_{F_2}$. We proceed by dividing the statement into two cases:

    \begin{itemize}
        \item \textbf{$e_1 \in A$:} If $e_2 \not\in A$, then $e_3 \not\in A^\prime$. Therefore, $\text{Part}(F_1 |_{A}) = \text{Part}(F_2 |_{A^\prime})$, which implies $x_{\lambda(A)} = x_{\lambda(A^\prime)}$. If $e_2 \in A$, then $e_3 \in A^\prime$. In this situation, we observe that in both $\text{Part}(A)$ and $\text{Part}(A^\prime)$, the components that do not include vertices $v_1$, $v_2$, and $v_3$ are identical. Additionally, the component that includes any one of these vertices in $A$ will contain the other two vertices as well, since $v_3$ is connected to $v_1$ via $e_2$, and $v_1$ is connected to $v_2$ via $e_1$. This structure holds similarly for $A^\prime$. The rest of these two components are thus equivalent by the definition of \texttt{mod1}. Since these three vertices are connected by two edges in each of the components, we conclude that $x_{\lambda(A)} = x_{\lambda(A^\prime)}$. Therefore, we can write:
        $$
            \sum_{ e_1 \in A \subseteq F_1} x_{\lambda (A)} - \sum_{ e_1 \in A^\prime \subseteq F_2} x_{\lambda (A^\prime)} = 0.
        $$
        \item \textbf{$e_1 \not\in A$:} We know that $P_1 = G_1 - e_1$ and $N_1 = G_2 - e_1$. Therefore, we can say that $A$  is a subset of edges of $P_1$ and $A^\prime$ is a subset of edges of $N_1$, each containing the exact same vertices and edges as $F_1$ and $F_2$, respectively. In fact, the set of all subsets $A$ that do not contain $e_1$ makes up the set of all subsets of $P_1$, and similarly, the set of all subsets $A^\prime$ that do not contain $e_1$ makes up the set of all subsets of $N_1$. Using these facts, we can write:
        $$
            \sum_{ e_1 \not\in A \subseteq F_1} x_{\lambda (A)} = \sum_{A \subseteq P_1} x_{\lambda (A)} = U_{P_1}, \quad \text{and} \quad \sum_{ e_1 \not\in A^\prime \subseteq F_2} x_{\lambda (A^\prime)} = \sum_{A^\prime \subseteq N_1} x_{\lambda (A^\prime)} = U_{N_1}.
        $$
    \end{itemize}
    Combining these results, we get:
    $$
        U_{F_1} - U_{F_2} = U_{P_1} - U_{N_1},
    $$
    as desired.
\end{proof}

Now that we have established that the relation also holds for $U$-polynomials, we start proving the theorem. For a forest $F$, let us call the number $\ell (\text{Part}(F))$ its \emph{level}. The following lemma shows how the CSF and the $U$-polynomial in forest-basis act the same.

\begin{lemma} \label{lem:firs-lvl-eval}
    Let $F_1$ and $F_2$ be forests with $n$ vertices such that  $\text{Part}(F_1) = \text{Part}(F_2)$. Let $B$ be a forest-basis. Then, for all $\lambda \vdash n$, such that $\ell(\lambda) = \ell(\text{Part}(F_1)) + 1$, we have:
    $$
        [\lambda]X_{F_1, B} - [\lambda]X_{F_2, B} = [\lambda]U_{F_1} - [\lambda]U_{F_2}.
    $$
\end{lemma}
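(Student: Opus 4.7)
The plan is to push both $X_{F_1}-X_{F_2}$ and $U_{F_1}-U_{F_2}$ through the same route and then compare the $[\lambda]$-coefficients term by term. Since $\text{Part}(F_1)=\text{Part}(F_2)$, Proposition \ref{prop:forests-route} yields a route $R=(F_1=H_1,H_2,\ldots,H_k=F_2)$. Applying Equation \eqref{march-form} to this march gives
$$
    X_{F_1} - X_{F_2} = \sum_{i=1}^{k-1} X_{P_i} - \sum_{i=1}^{k-1} X_{N_i}.
$$
By Lemma \ref{lem:upoly-relation}, the step relation holds for the $U$-polynomial as well, and iterating it along the same route produces the parallel identity
$$
    U_{F_1} - U_{F_2} = \sum_{i=1}^{k-1} U_{P_i} - \sum_{i=1}^{k-1} U_{N_i}.
$$
So the task reduces to showing $[\lambda]X_{P_i,B}=[\lambda]U_{P_i}$ and $[\lambda]X_{N_i,B}=[\lambda]U_{N_i}$ for each remainder.

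For the CSF side, every remainder $P_i$ (resp.\ $N_i$) has one fewer edge than $F_1$, so $\ell(\text{Part}(P_i))=\ell(\text{Part}(F_1))+1=\ell(\lambda)$. Corollary \ref{cor:coef-zero} forces $[\lambda]X_{P_i,B}=0$ unless $\lambda\leq\text{Part}(P_i)$, and equal length promotes this to $\lambda=\text{Part}(P_i)$; in that case Corollary \ref{cor:coef-one} gives $[\lambda]X_{P_i,B}=1$. Hence $[\lambda]X_{P_i,B}=\mathbbm{1}[\text{Part}(P_i)=\lambda]$, and the same holds for $N_i$.

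For the $U$-polynomial side, a simple edge-count argument yields the same indicator. Since $|E(P_i)|=|E(F_1)|-1=n-\ell(\text{Part}(F_1))-1=n-\ell(\lambda)$, any subset $A\subseteq E(P_i)$ with $\lambda(A)=\lambda$ must satisfy $|A|=n-\ell(\lambda)=|E(P_i)|$, forcing $A=E(P_i)$. Therefore the only contribution to $[\lambda]U_{P_i}$ comes from the full edge set, giving $[\lambda]U_{P_i}=\mathbbm{1}[\text{Part}(P_i)=\lambda]$, and analogously for $N_i$.

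Combining these, both $[\lambda]X_{F_1,B}-[\lambda]X_{F_2,B}$ and $[\lambda]U_{F_1}-[\lambda]U_{F_2}$ collapse to the common integer
$$
    \sum_{i=1}^{k-1}\mathbbm{1}[\text{Part}(P_i)=\lambda]-\sum_{i=1}^{k-1}\mathbbm{1}[\text{Part}(N_i)=\lambda],
$$
which finishes the proof. The main obstacle is psychological rather than technical: one has to recognize that at the specific level $\ell(\lambda)=\ell(\text{Part}(F_1))+1$ the remainders carry exactly one edge too few, so in both worlds the coefficient of $x_\lambda$ (or of $X_{G_\lambda}$) degenerates to a partition-equality indicator. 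Once this is spotted, the CSF and $U$-polynomial sides are literally the same sum.
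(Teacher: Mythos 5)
Your proof is correct and follows essentially the same route as the paper: you apply the march formula to a route between $F_1$ and $F_2$, transfer it to the $U$-polynomial via Lemma~\ref{lem:upoly-relation}, and observe that at this level both $[\lambda]X_{P_i,B}$ and $[\lambda]U_{P_i}$ collapse to the indicator $\mathbbm{1}[\text{Part}(P_i)=\lambda]$. Your explicit edge-count argument showing that the only $A\subseteq E(P_i)$ with $\lambda(A)=\lambda$ is $A=E(P_i)$ is a welcome clarification of the paper's terse ``by the same reasoning,'' which otherwise leaves the $U$-polynomial side implicit.
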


\begin{proof}
    By previous discussion, there exists a route $R$ from $F_1$ to $F_2$ with $k - 2$ forests between them, yielding:
    $$
        X_{F_1} = X_{F_2} + \sum_{i=1}^{k-1} X_{P_i} - \sum_{i=1}^{k-1} X_{N_i}.
    $$
    By Corollaries \ref{cor:coef-zero} and \ref{cor:coef-one}, we know that for each $1 \leq i \leq k-1$ , the value of $[\lambda]X_{P_i}$ is either zero or one, depending on whether $\text{Part}(P_i) = \lambda$. The same holds for $[\lambda]X_{N_i}$. Therefore, we can express:
    $$
        [\lambda]X_{F_1} - [\lambda]X_{F_2} = \#_{\lambda} P_i - \#_{\lambda} N_i,
    $$
    where $\#_{\lambda} P_i$ and $\#_{\lambda} N_i$ represent the number of $P_i$ and $N_i$ forests such that $\text{Part}(P_i) = \lambda$ and $\text{Part}(N_i) = \lambda$, respectively.
    On the other hand, with respect to the route $R$, we have:
    $$
        U_{F_1} = U_{F_2} + \sum_{i=1}^{k-1} U_{P_i} - \sum_{i=1}^{k-1} U_{N_i}.
    $$
    By the same reasoning, $[\lambda]U_{P_i}$ is also either zero or one, depending on whether $\text{Part}(P_i) = \lambda$. Thus, we get:
    $$
        [\lambda]U_{F_1} - [\lambda]U_{F_2} = \#_{\lambda} P_i - \#_{\lambda} N_i = [\lambda]X_{F_1} - [\lambda]X_{F_2},
    $$
    completing the proof.
\end{proof}

 Note that the relation shown in this lemma happens at exactly one level higher than the level of $F_1$ and $F_2$. Also, you might have noticed that we did not use the elements of basis $B$ in the proof, meaning that it does not matter how you choose the basis as long as it remains a forest-basis. You can even choose different bases for $F_1$ and $F_2$, and the lemma will still hold. All these notions point to the fact that the $U$-polynomial behaves almost the same as the CSF in the case of forests.

\begin{definition} \label{def:X_k}
    Let $G$ be a graph with $n$ vertices such that $\text{Part}(G)=\mu$, and let $B$ be a chromatic-basis such that $X_{G, B} (x) =  \sum_{\lambda \vdash n} c_\lambda X_{G_\lambda}$. For a $k \in \mathbb{Z}^{+} \cup \left\{0\right\}$, we define $X^{k}_{G, B} (x)$ to be:
    $$
        X^{k}_{G, B} (x) \triangleq \sum_{\substack{\lambda \vdash n \\ \ell(\lambda) \leq \ell (\mu) + k}} c_\lambda X_{G_\lambda}.
    $$
\end{definition}

A new question that comes to mind after Lemma \ref{lem:firs-lvl-eval} is how far can we go in terms of levels and still have the relation to be true? We try to answer this question in the following proposition. We will call this number the \emph{corner number} in Definition \ref{def:corner}.

\begin{proposition} \label{prop:lambda-zero-march}
    Let $F_1$ and $F_2$ be forests with $n$ vertices such that $\text{Part}(F_1) = \text{Part}(F_2) = \mu$. Let $B$ be a forest-basis. If for some $k \in \mathbb{Z}^{+} \cup \left\{0\right\}$ we have $X^{k}_{F_1, B} - X^{k}_{F_2, B} = 0$, then the following statements hold:
    \begin{itemize}
        \item There exist sets $\left\{ P_i : i \in \{1, \ldots, t\} \right\}$ and $\left\{ N_i : i \in \{1, \ldots, t\} \right\}$ such that $\ell(\text{Part}(P_i)) = \ell(\text{Part}(N_i)) = \ell(\mu) + k + 1$, and  
        $$
            X_{F_1} - X_{F_2} = \sum_{i=1}^{t} X_{P_i} - \sum_{i=1}^{t} X_{N_i} \quad \text{ and } \quad U_{F_1} - U_{F_2} = \sum_{i=1}^{t} U_{P_i} - \sum_{i=1}^{t} U_{N_i}.
        $$
        \item For all $\lambda \vdash n$ such that $\ell(\lambda) \leq \ell(\mu) + k + 1$, we have:
        $$
            [\lambda]X_{F_1, B} - [\lambda]X_{F_2, B} = [\lambda]U_{F_1} - [\lambda]U_{F_2}.
        $$
    \end{itemize}
\end{proposition}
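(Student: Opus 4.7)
The plan is to proceed by induction on $k$. For the base case $k=0$, the hypothesis $X^{0}_{F_1,B} = X^{0}_{F_2,B}$ holds automatically, since by Corollaries~\ref{cor:coef-zero} and~\ref{cor:coef-one} each side equals $X_{G_\mu}$. I invoke Proposition~\ref{prop:forests-route} to obtain a route $R$ from $F_1$ to $F_2$; Equation~\eqref{march-form} along $R$ gives the CSF identity in part~1, and iterating Lemma~\ref{lem:upoly-relation} along the \emph{same} $R$ gives its $U$-polynomial counterpart. Each step removes one edge, so every $P_i, N_i$ sits at level $\ell(\mu)+1$. Part~2 at level $\ell(\mu)+1$ is exactly Lemma~\ref{lem:firs-lvl-eval}; at strictly lower levels both differences vanish, using Corollaries~\ref{cor:coef-zero}--\ref{cor:coef-one} on the CSF side and the elementary observation that $U_F = \sum_A x_{\lambda(A)}$ with $\ell(\lambda(A)) \geq \ell(\text{Part}(F))$ and equality only when $A = E(F)$.

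For the inductive step, assume the proposition holds for $k-1$. Since $X^{k}_{F_1,B} = X^{k}_{F_2,B}$ implies $X^{k-1}_{F_1,B} = X^{k-1}_{F_2,B}$, the inductive hypothesis supplies forests $\{P^{(k-1)}_i\}$ and $\{N^{(k-1)}_i\}$ at level $\ell(\mu)+k$ with
\[
X_{F_1} - X_{F_2} \;=\; \sum_i X_{P^{(k-1)}_i} - \sum_i X_{N^{(k-1)}_i}, \qquad U_{F_1} - U_{F_2} \;=\; \sum_i U_{P^{(k-1)}_i} - \sum_i U_{N^{(k-1)}_i}.
\]
For any $\nu \vdash n$ with $\ell(\nu) = \ell(\mu)+k$, taking $[\nu]$ in the CSF identity collapses the right side via Corollaries~\ref{cor:coef-zero} and~\ref{cor:coef-one} to $\#\{i : \text{Part}(P^{(k-1)}_i) = \nu\} - \#\{i : \text{Part}(N^{(k-1)}_i) = \nu\}$, while the extra level-$k$ information forces the left side to vanish. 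This yields a bijective pairing of the two families grouping forests of equal partition type.

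For each pair $(P^{(k-1)}_{i_j}, N^{(k-1)}_{i'_j})$ with common partition, Proposition~\ref{prop:forests-route} again supplies a route; Equation~\eqref{march-form} together with Lemma~\ref{lem:upoly-relation} along that common route produce forests $\{P_{j,l}, N_{j,l}\}$ at level $\ell(\mu)+k+1$ whose signed sums equal $X_{P^{(k-1)}_{i_j}} - X_{N^{(k-1)}_{i'_j}}$ and its $U$-polynomial analogue. Summing over $j$ telescopes back to the desired expressions for $X_{F_1} - X_{F_2}$ and $U_{F_1} - U_{F_2}$, establishing part~1 at level $k$. Part~2 at levels $\le \ell(\mu)+k$ is immediate from the inductive hypothesis combined with the level-$k$ hypothesis; at the new level $\ell(\mu)+k+1$, extracting $[\lambda]$ from both displayed identities and invoking Corollaries~\ref{cor:coef-zero}--\ref{cor:coef-one} on the CSF side and the same length-monotonicity of $U_F$ on the $U$-side makes both differences equal the common count $\#\{(j,l) : \text{Part}(P_{j,l}) = \lambda\} - \#\{(j,l) : \text{Part}(N_{j,l}) = \lambda\}$.

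The main obstacle will be the bookkeeping during the pairing step: I must ensure each $P^{(k-1)}_i$ and $N^{(k-1)}_i$ is accounted for exactly once, and that the CSF and $U$-polynomial identities stay synchronized after the refinement. The synchronization is what makes everything work, because the \emph{same} route produces both the march-form CSF identity via Equation~\eqref{step-form} and its $U$-polynomial counterpart via Lemma~\ref{lem:upoly-relation}, so every refinement of a matched pair contributes an identical list of forests on both sides.
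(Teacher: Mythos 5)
Your proposal is correct and follows essentially the same path as the paper: induction on $k$, pairing the level-$(\ell(\mu)+k)$ remainders by partition type using the vanishing of $X^k$, refining each matched pair along a route to push down to level $\ell(\mu)+k+1$ simultaneously for $X$ and $U$, and extracting coefficients via Corollaries~\ref{cor:coef-zero} and~\ref{cor:coef-one} (and the analogous $U$-polynomial observation). If anything, your base case is slightly more careful than the paper's, since you explicitly observe that $X^{0}_{F_i,B}=X_{G_\mu}$ makes the $k=0$ hypothesis vacuous and then assemble part~1 from Proposition~\ref{prop:forests-route}, Equation~\eqref{march-form}, and Lemma~\ref{lem:upoly-relation}, whereas the paper simply cites Lemma~\ref{lem:firs-lvl-eval}.
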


\begin{proof}
    We prove both statements by induction on $k$.
    Lemma \ref{lem:firs-lvl-eval} acts as the basis of induction for $k=0$.
    Now, assume the statement holds for $k - 1$. Then, there exist sets of size $t$ containing $P_i$s and $N_i$s such that $\ell(\text{Part}(P_i)) = \ell(\text{Part}(N_i)) = \ell(\mu) + k$ and we have:
    $$
        X_{F_1} - X_{F_2} = \sum_{i=1}^{t} X_{P_i} - \sum_{i=1}^{t} X_{N_i}.
    $$
    Since $X_{F_1}^k - X_{F_2}^k = 0$, let $\lambda$ be a partition with length $\ell(\mu) + k$. Then,
    $$
        [\lambda]X_{F_1} - [\lambda]X_{F_2} = \sum_{i=1}^{t} [\lambda]X_{P_i} - \sum_{i=1}^{t} [\lambda]X_{N_i} = 0.
    $$
    By Corollaries \ref{cor:coef-zero} and \ref{cor:coef-one}, $[\lambda]X_{P_i}$ and $[\lambda]X_{N_i}$ are either zero or one, depending on whether $\text{Part}(P_i) = \lambda$ and $\text{Part}(N_i) = \lambda$ or not. Thus, the number of $P_i$s and $N_i$s with partition $\lambda$ is equal. This argument holds for each partition of length $\ell(\mu) + k$, allowing us to pair each $P_i$ with partition $\lambda^i$ with an $N_i$ of the same partition. Without loss of generality, we assume that $P_i$ is paired with $N_i$.
    Consequently, we can write:
    $$
        X_{F_1} - X_{F_2} = \sum_{i=1}^{t} (X_{P_i} - X_{N_i}).
    $$
    Since $P_i$ and $N_i$ are paired with the same partitions, there exists a route $R$ between $P_i$ and $N_i$, allowing us to express $X_{P_i} - X_{N_i}$ as:
    $$
        X_{P_i} - X_{N_i} = \sum_{j=1}^{t^{\prime}_i} X_{P^{\prime}_{ij}} - \sum_{j=1}^{t^{\prime}_i} X_{N^{\prime}_{ij}}.
    $$
    Substituting into our equation, we can write:
    $$
        X_{F_1} - X_{F_2} = \sum_{i=1}^{t} \left( \sum_{j = 1}^{t^\prime_i} X_{P^{\prime}_{ij}} - \sum_{j = 1}^{t^\prime_i}X_{N^{\prime}_{ij}} \right),
    $$
    where $\ell(\text{Part}(P_i)) = \ell(\text{Part}(N_i)) = \ell(\mu) + k + 1$. Thus, the statement holds for $k$.
    Furthermore, by Lemma \ref{lem:upoly-relation} and the fact that $[\lambda] U_F$ is either zero or one depending on whether $\text{Part}(F) = \lambda$ or not when $\ell (\lambda) = \ell (\text{Part}(F))$, the same argument applies.

    For the second statement, by Lemma \ref{lem:firs-lvl-eval}, the base case $k = 0$ is proved. Assume the statement holds for $k - 1$; we will prove it holds for $k$.
    If $\ell(\lambda) \leq \ell (\mu) + k$, the induction hypothesis holds and we are done. So suppose that $\ell(\lambda) = \ell (\mu) + k + 1$. In the previous step, we showed that if $X^{k}_{F_1} - X^{k}_{F_2} = 0$, there exist two sets containing $P_i$s and $N_i$s with $\ell(\text{Part}(P_i)) = \ell(\text{Part}(N_i)) = \ell(\mu) + k + 1$ such that:
    $$
        X_{F_1} - X_{F_2} = \sum_{i=1}^{t} X_{P_i} - \sum_{i=1}^{t} X_{N_i} \quad \text{ and } \quad U_{F_1} - U_{F_2} = \sum_{i=1}^{t} U_{P_i} - \sum_{i=1}^{t} U_{N_i}.
    $$
    For an arbitrary partition $\lambda$ with $\ell(\lambda) = \ell(\mu) + k + 1$, we can write:
    $$
        [\lambda]X_{F_1} - [\lambda]X_{F_2} = \sum_{i=1}^{t} [\lambda]X_{P_i} - \sum_{i=1}^{t} [\lambda]X_{N_i}.
    $$
    By Corollaries \ref{cor:coef-zero} and \ref{cor:coef-one}, $[\lambda]X_{P_i}$ and $[\lambda]X_{N_i}$ are either zero or one depending on whether their partition equals $\lambda$. Thus, we can write:
    $$
        [\lambda]X_{F_1} - [\lambda]X_{F_2} = \#_{\lambda} P_i - \#_{\lambda} N_i,
    $$
    where $\#_{\lambda} P_i$ and $\#_{\lambda} N_i$ denote the number of $P_i$s and $N_i$s with $\text{Part}(P_i) = \lambda$ and $\text{Part}(N_i) = \lambda$, respectively.
    The same holds for $U$-polynomials, as we have:
    $$
        [\lambda]U_{F_1} - [\lambda]U_{F_2} = \sum_{i=1}^{t} [\lambda]U_{P_i} - \sum_{i=1}^{t} [\lambda]U_{N_i},
    $$
    and $[\lambda]U_{P_i}$ and $[\lambda]U_{N_i}$ are either zero or one depending on whether their partition equals $\lambda$. Hence, we conclude that:
    $$
        [\lambda]U_{F_1} - [\lambda]U_{F_2} = \#_{\lambda} P_i - \#_{\lambda} N_i = [\lambda]X_{F_1} - [\lambda]X_{F_2}.
    $$
    This completes the proof.
\end{proof}

Again, note that the choice of $B$ does not matter and can even be different for $F_1$ and $F_2$. Now, we define the \emph{corner number} formally.

\begin{definition} \label{def:corner}
    Let $F$ be a forest with $n$ vertices such that $\text{Part}(F) = \lambda$, and let $B$ be an arbitrary chromatic-basis such that $X_{F^\prime}$ where $F^\prime$ is a forest and $\text{Part}(F^\prime) = \lambda$. We say $k$ is $F$'s \emph{corner number} with respect to $B$ if $X^{k-1}_{F,B} = X_{F^\prime}$ but $X^k_{G,B} \neq X_{F^\prime}$.
\end{definition}

Note that since for $k = 0$, we have $X^k_{G,B} = X_{F^\prime}$. Thus, the corner number of forest $F$ with respect to a forest-basis $B$ is at least $1$. Now, as a result of the proposition, we achieve our desired theorem.

\begin{theorem} \label{thm:U-equiv}
    Let $F_1$ and $F_2$ be forests with $n$ vertices such that $\text{Part}(F_1) = \text{Part}(F_2) = \mu$. Let $B$ be a forest-basis such that $X_{F_2} \in B$. Then, for all $\lambda \vdash n \text{ such that } \ell(\lambda) \leq k$, and $\lambda \neq \mu$, the following holds:
    $$
        [\lambda]X_{F_1, B} = [\lambda]U_{F_1} - [\lambda]U_{F_2},
    $$
    where $k$ is the corner number of $F_1$ with respect to $B$. Also, for $\lambda = \mu$ we have $[\lambda]X_{F_1, B} = 1$ and $[\lambda]U_{F_1} - [\lambda]U_{F_2} = 0$.
\end{theorem}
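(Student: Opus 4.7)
The plan is to invoke Proposition \ref{prop:lambda-zero-march} with parameter $k-1$, where $k$ is the corner number of $F_1$ with respect to $B$, and then to handle the $\lambda = \mu$ case separately by hand. First I would observe that because $F_2 \in B$ and $\text{Part}(F_2) = \mu$, the $B$-expansion of $X_{F_2}$ collapses to a single basis vector: $X_{F_2, B} = X_{F_\mu}$ with $F_\mu = F_2$, so $[\lambda]X_{F_2, B} = \delta_{\lambda, \mu}$. The definition of the corner number then gives $X^{k-1}_{F_1, B} = X_{F_\mu}$, while trivially $X^{k-1}_{F_2, B} = X_{F_\mu}$, so the difference $X^{k-1}_{F_1, B} - X^{k-1}_{F_2, B}$ vanishes; this is exactly the hypothesis required by Proposition \ref{prop:lambda-zero-march}.

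Applying the second bullet of that proposition (with its $k$ playing the role of our $k-1$) would then yield, for every partition $\lambda$ in the stated range,
$$[\lambda]X_{F_1, B} - [\lambda]X_{F_2, B} = [\lambda]U_{F_1} - [\lambda]U_{F_2}.$$
When $\lambda \neq \mu$, the term $[\lambda]X_{F_2, B}$ vanishes by the first observation, delivering the desired identity $[\lambda]X_{F_1, B} = [\lambda]U_{F_1} - [\lambda]U_{F_2}$ immediately.

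For the $\lambda = \mu$ case I would argue separately. Corollary \ref{cor:coef-one} gives $[\mu]X_{F_1, B} = 1$ directly. For the $U$-polynomial side, I would use the alternative form $U_F = \sum_{A \subseteq E(F)} x_{\lambda(E(F) \setminus A)}$ from Remark \ref{def:u-poly-forest}: deleting any nonempty set of edges from a forest strictly splits some component, so $\lambda(E(F) \setminus A)$ is strictly finer than $\mu = \text{Part}(F)$ whenever $A \neq \emptyset$, and the only subset contributing $x_\mu$ is $A = \emptyset$. Thus $[\mu]U_{F_1} = [\mu]U_{F_2} = 1$, so their difference is $0$, matching the theorem's assertion.

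The main obstacle is conceptual bookkeeping rather than genuine combinatorial difficulty: aligning the corner-number indexing (in which $X^{k-1}_{F_1, B}$ is the last truncation still equal to $X_{F_\mu}$) with the $+1$ offset built into the conclusion of Proposition \ref{prop:lambda-zero-march}, and then carving off the $\lambda = \mu$ term, where Corollary \ref{cor:coef-one} together with the $A = \emptyset$ observation replace the inductive machinery of the proposition. After these two alignments, the result drops out by direct substitution and no further argument is needed.
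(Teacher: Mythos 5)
Your proof is correct and takes essentially the same approach as the paper, which simply cites Proposition \ref{prop:lambda-zero-march}; you spell out the hypothesis check (via the corner-number definition and $[\lambda]X_{F_2,B} = \delta_{\lambda\mu}$) and the index alignment that the paper leaves implicit. Your direct computation of $[\mu]U_{F_1} - [\mu]U_{F_2} = 0$ is a correct but slightly redundant detour, since the proposition's identity at $\lambda = \mu$ combined with $[\mu]X_{F_1,B} = [\mu]X_{F_2,B} = 1$ already yields it.
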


\begin{proof}
    The theorem follows directly from Proposition \ref{prop:lambda-zero-march}.
\end{proof}

The most important aspect of this theorem is how we proved it. Note that although we have restricted the forest-basis $B$ by putting $X_{F_2}$ inside, the rest of $B$ is still entirely arbitrary. Furthermore, by proving Lemma \ref{lem:upoly-relation} and Proposition \ref{prop:lambda-zero-march}, we showed that one way to view these polynomials is by defining them only using a simple relation, meaning that you can define problems like the tree conjecture without defining a meaningful function, such as the CSF or the $U$-polynomial. In this section, we have mostly avoided what the CSF and the $U$-polynomial actually are and instead try to relativize graphs using their properties. One could probably argue that the main idea of this paper is to relate graphs to each other.

Finally, we can deduce a previously achieved result in~\cite{noble1999weighted} from our theorem.

\begin{corollary}
    For two forests $F_1$ and $F_2$, we have $X_{F_1} = X_{F_2}$ if and only if $U_{F_1} = U_{F_2}$.
\end{corollary}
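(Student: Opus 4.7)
The plan is to derive both implications directly from Theorem \ref{thm:U-equiv}, after first observing that in either hypothesis the two forests must share a common partition $\mu \triangleq \text{Part}(F_1) = \text{Part}(F_2)$. On the CSF side this is immediate from Corollaries \ref{cor:coef-zero} and \ref{cor:coef-one}: in any forest-basis expansion of $X_F$, the coarsest partition with nonzero coefficient is exactly $\text{Part}(F)$, and that coefficient equals $1$. On the $U$-polynomial side, note that for a forest the only subset $A \subseteq E(F)$ with $\lambda(A) = \text{Part}(F)$ is $A = E(F)$ itself, since removing any edge of a forest strictly refines its component partition; hence the monomial $x_{\text{Part}(F)}$ appears in $U_F$ with coefficient exactly $1$, and its index pins down $\mu$.

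Fix a forest-basis $B$ containing $X_{F_2}$, so that $F_2$ plays the role of the $\mu$-representative in $B$. For the forward direction, if $X_{F_1} = X_{F_2}$, then the expansion $X_{F_1, B}$ has coefficient $1$ at $\mu$ and $0$ everywhere else. Plugging into Theorem \ref{thm:U-equiv} (with $F_2$ as the distinguished basis element) yields
\[
    [\lambda] U_{F_1} - [\lambda] U_{F_2} = [\lambda] X_{F_1, B} = 0 \qquad \text{for every } \lambda \neq \mu,
\]
while the $\mu$-coefficients of $U_{F_1}$ and $U_{F_2}$ both equal $1$ by the observation above. Thus $U_{F_1} = U_{F_2}$.

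For the converse, assume $U_{F_1} = U_{F_2}$ and suppose toward a contradiction that $X_{F_1} \neq X_{F_2}$. Then the $B$-expansion of $X_{F_1}$ has at least one nonzero coefficient at some $\lambda \neq \mu$, so the corner number $k$ of $F_1$ with respect to $B$ from Definition \ref{def:corner} is a finite positive integer. But Theorem \ref{thm:U-equiv} asserts that for every $\lambda \neq \mu$ within the corner's range,
\[
    [\lambda] X_{F_1, B} = [\lambda] U_{F_1} - [\lambda] U_{F_2} = 0,
\]
which contradicts the existence of a nonzero coefficient at the corner level and thus the finiteness of $k$. Hence $X_{F_1} = X_{F_2}$.

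The main obstacle is this last contradiction step: one has to ensure that the very level at which $X_{F_1, B}$ first deviates from $X_{F_2}$ actually lies within the range to which Theorem \ref{thm:U-equiv} applies, so that the vanishing forced by $U_{F_1} = U_{F_2}$ really bites at the corner. This fits cleanly because the corner number is, by construction, the smallest level where deviation occurs, which is exactly the level to which the iterative argument of Proposition \ref{prop:lambda-zero-march} (underlying Theorem \ref{thm:U-equiv}) reaches.
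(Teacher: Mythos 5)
Your proposal is correct and is essentially an unpacking of the paper's one-line proof (``follows directly from Theorem~\ref{thm:U-equiv}''); both rely on the same machinery, namely Theorem~\ref{thm:U-equiv} applied to a forest-basis $B$ containing $X_{F_2}$. One technical wrinkle worth flagging in your forward direction: when $X_{F_1} = X_{F_2}$, the corner number of Definition~\ref{def:corner} is not a finite integer, so Theorem~\ref{thm:U-equiv} does not literally apply as stated. The clean way to close this is to drop down to Proposition~\ref{prop:lambda-zero-march}: since $X^k_{F_1,B} - X^k_{F_2,B} = 0$ for \emph{every} $k$, the proposition yields $[\lambda]U_{F_1} - [\lambda]U_{F_2} = [\lambda]X_{F_1,B} - [\lambda]X_{F_2,B} = 0$ for every $\lambda$, hence $U_{F_1}=U_{F_2}$. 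Your converse direction and your observations that each hypothesis forces $\text{Part}(F_1)=\text{Part}(F_2)$ (via Corollaries~\ref{cor:coef-zero}--\ref{cor:coef-one} on the CSF side, and the fact that the coarsest monomial in $U_F$ is $x_{\text{Part}(F)}$ with coefficient $1$ on the $U$-side) are sound.
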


\begin{proof}
    The corollary follows directly from Theorem \ref{thm:U-equiv}.
\end{proof}

Another paper worth mentioning is the work of Chan and Crew~\cite{chan2024graph}. Although their approach is different from ours, some of their results are achievable through ours. For instance, Theorem 3.1 or Lemma 3.3 can be inferred from Equation \eqref{march-form}. 


\section{Homomorphisms and Calculating the CSF} \label{section:main-2}

The primary source of inspiration for this section was the reconstruction conjecture. Since this conjecture is known to be true for the case of forests~\cite{harary2006survey}, a reasonable approach for tree conjecture would be to check whether one can reach the deck of cards of a forest only having its CSF. To walk down this road, one needs to find a connection between the induced subgraphs with one less vertex and the CSF of the main tree under study. Thus, in this section, we develop a linear relation that can give us most of the CSF of our main graph by having these induced subgraphs and their CSF. In this section, our basis of interest for the CSF is the $m_\lambda$-basis.

\begin{definition}
    Given a graph $G$ with $n$ vertices and a partition $\lambda = (\lambda_{1}, \lambda_{2}, \ldots, \lambda_{\ell}) \vdash n$, we say a set
    $$
       P = \left\{ \{v_1, v_2, \ldots, v_{\lambda_{1}}\}, \{v_{\lambda_{1}+1}, \ldots, v_{\lambda_{1} + \lambda_{2}}\}, \ldots, \{v_{\lambda_1 + \cdots + \lambda_{\ell - 1} + 1}, \ldots, v_{n}\} \right\}
    $$
    is an \emph{independent $\lambda$-partition} of $G$ if, for every $P \in A$, there is no edge $e$ between the vertices in $P$. Each set $P \in A$ is called a \emph{part} of $A$. Moreover, $P$ is a \emph{$k$-part} of $A$ if it contains $k$ elements.
\end{definition}

\begin{definition}
    Given two graphs $H$ and $G$ with an isomorphism $\phi : H \to G$, for an independent $\lambda$-partition $P = \left\{ Q_1, Q_2, \ldots, Q_\ell \right\}$ of $H$, we define $\phi(P)$ as:
    $$
       \phi(P) \triangleq \left\{ \phi(Q_1), \phi(Q_2), \ldots, \phi(Q_\ell) \right\},
    $$
    where $\phi(Q_i) = \left\{ \phi(v) \mid v \in Q_i \right\}$.
    Note that $\phi(P)$ is itself an independent $\lambda$-partition of $G$.
\end{definition}

The idea behind Theorem \ref{prop:lambda-mat-vec-full-homo} is to see induced subgraphs of our graph $G$ as separate smaller graphs embedded in $G$ by a morphism. We can use this view to induce $G$'s independent partitions onto smaller graphs and vice versa, resulting in the desired relation.

\begin{theorem} \label{prop:lambda-mat-vec-full-homo}
    For a given graph $G$ with $n$ vertices and a partition $\lambda^1 = (\lambda_{1}, \lambda_{2}, \ldots, \lambda_{\ell}) \vdash n$, we have:
    $$
       c^{G}_{\lambda^1} = \binom{n-m}{k-m}^{-1} \sum_{H : |V(H)| = k} c^{H}_{\lambda^2} \binom{G}{H},
    $$
    where $\lambda^{1} \sim \lambda^{2}$, $\lambda^{1 \ast} \vdash m$, and $\lambda^{2} \vdash k$.
\end{theorem}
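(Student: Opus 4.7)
The plan is a direct double-counting argument that rests on the standard combinatorial identification of $c^G_\lambda$ with the number of independent $\lambda$-partitions of $G$, i.e.\ stable set partitions of $G$ whose block sizes form the multiset $\lambda$. Writing $\lambda^{1\ast} = \lambda^{2\ast}$, note that $\lambda^1$ has exactly $n - m$ singleton parts while $\lambda^2$ has $k - m$ singletons, and both share the same multiset of non-singleton parts summing to $m$.

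The key step is to count in two ways the set
$$\mathcal{S} = \left\{ (\pi, T) : \pi \text{ is an independent } \lambda^1\text{-partition of } G,\ T \text{ is a subset of } k - m \text{ of the singleton parts of } \pi \right\}.$$
Fixing $\pi$ first and then choosing $T$ gives $|\mathcal{S}| = c^G_{\lambda^1} \binom{n-m}{k-m}$. For the second count I would exhibit a bijection between $\mathcal{S}$ and the set of pairs $(S, \pi')$ with $S \subseteq V(G)$, $|S| = k$, and $\pi'$ an independent $\lambda^2$-partition of the induced subgraph $G[S]$. Given $(\pi, T)$, let $S$ be the union of the non-singleton parts of $\pi$ with the vertices in $T$, and let $\pi'$ consist of those same non-singleton parts together with the $k - m$ singletons from $T$; independence of $\pi'$ is automatic because $G[S]$ is induced, and $\pi'$ has type $\lambda^2$ since $\lambda^{2\ast} = \lambda^{1\ast}$ and exactly $k - m$ singletons have been retained. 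The inverse sends $(S, \pi')$ to the pair $(\pi, T)$ where $\pi := \pi' \cup \left\{ \{v\} : v \in V(G) \setminus S \right\}$ and $T$ is read off as the set of singleton parts of $\pi'$. Grouping the subsets $S$ by the isomorphism class $H$ of $G[S]$ converts the second count into $|\mathcal{S}| = \sum_{H : |V(H)| = k} c^H_{\lambda^2} \binom{G}{H}$.

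Equating the two expressions for $|\mathcal{S}|$ and dividing by $\binom{n-m}{k-m}$ yields the claimed identity. The combinatorial content is essentially bookkeeping, so I do not anticipate a substantive obstacle; the only step requiring care is checking that the map $(\pi, T) \mapsto (S, \pi')$ is a well-defined bijection, which amounts to observing that independence is preserved under restriction to induced subgraphs and that the $n - k$ vertices outside $S$ unambiguously supply the extra singletons needed to pass from $\lambda^2$-type on $G[S]$ back to $\lambda^1$-type on $G$.
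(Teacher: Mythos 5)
Your proposal is correct and is essentially the paper's argument recast as an explicit double count: the paper also identifies $c^G_{\lambda^1}$ with the number of independent $\lambda^1$-partitions, builds each such partition from a $\lambda^2$-partition of a $k$-vertex induced subgraph together with the singletons outside it, and shows each $\lambda^1$-partition arises exactly $\binom{n-m}{k-m}$ times. Your formulation via the set $\mathcal{S}$ and the explicit bijection with pairs $(S,\pi')$ is a cleaner way to organize the same bookkeeping (in particular, working directly with vertex subsets $S$ and $\binom{G}{H}$ avoids the paper's device of fixing one isomorphism $\phi_H^K$ per induced copy), but the underlying combinatorics is the same.
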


\begin{proof}
    We aim to enumerate all independent $\lambda^{1}$-partitions of $G$. First, we introduce a method to enumerate these sets using independent $\lambda^{2}$-partitions of all $k$-vertex subgraphs of $G$. Second, we show that each independent $\lambda^{1}$-partition of $G$ is counted at least once using this method. Lastly, we prove that each independent $\lambda^{1}$-partition of $G$ is enumerated exactly $\binom{n-m}{k-m}$ times; thus, by dividing the total count $\sum_{H : |V(H)| = k} c_{\lambda^{2}}^{H} \binom{G}{H}$ by this factor, we obtain $c_{\lambda^{1}}^{G}$.

    Let $H$ be a graph with $k$ vertices. Note that if there is no subgraph $K$ of $G$ isomorphic to $H$, then $\binom{G}{H} = 0$. For each graph $H$ with $k$ vertices, and each subgraph of $G$ isomorphic to $H$, let $\phi_{H}^{K} : H \rightarrow K$ be a fixed isomorphism from $H$ to $K$. Let $P_1, P_2, \ldots, P_{c^{H}_{\lambda^{2}}}$ be the $c^{H}_{\lambda^{2}}$ distinct independent $\lambda^{2}$-partitions of $H$. Define $W = \left\{ \{v_i\} \mid v_i \in (G - K) \right\}$. Then, for each $P_j$, let $P^{\prime}_{j}$ be the union of $\phi_{H}^{K}(P_j)$ with $W$. Since $\lambda^{1} \sim \lambda^{2}$, each $P^{\prime}_{j}$ is an independent $\lambda^{1}$-partition of $G$. For each $H$, we apply the $\binom{G}{H}$ fixed isomorphisms from $H$ to distinct subgraphs of $G$, and for each $H$ there are $c_{\lambda^{2}}^{H}$ independent $\lambda^{2}$-partitions. Thus, we generate $\sum_{H : |V(H)| = k} c_{\lambda^{2}}^{H} \binom{G}{H}$ independent $\lambda^{1}$-partitions in $G$; however, these partitions are not necessarily distinctive.

    We now claim that the described method enumerates each independent $\lambda^{1}$-partition of $G$ at least once. Let $P^{\prime}_{k}$ be an independent $\lambda^{1}$-partition of $G$. Since $P^{\prime}_{k}$ contains $n-m$ number of $1$-parts, let $W$ represent all $1$-parts of $P^{\prime}_{k}$. By selecting $k-m$ elements from $W$ along with $P^{\prime}_{k} - W$, we obtain a $k$-vertex subgraph of $G$, denoted as $K$. Let $H^{\prime}$ be a graph isomorphic to $K$. Since $H^{\prime}$ contains $k$ vertices, it has already been counted in $\sum_{H : |V(H)| = k} c_{\lambda^{2}}^{H} \binom{G}{H}$. Therefore, $P^{\prime}_{k}$ has been enumerated at least once.

    Finally, we determine how many times each independent $\lambda^{1}$-partition has been enumerated. Let $P^{\prime}_{k}$ be an independent $\lambda^{1}$-partition of $G$. As noted, $P^{\prime}_{k}$ has $n-m$ $1$-parts; we choose $k-m$ of these $1$-parts and exclude the others to obtain a smaller set, denoted $P_j$. Let $K$ be the graph containing all vertices in the parts of $P_j$. Since $\lambda^{1 \ast} \vdash m$ and $\lambda^{2} \vdash k$, we know $K$ contains $k$ vertices, and $P_j$ is an independent $\lambda^{2}$-partition of $K$. This subgraph $K$ is isomorphic to a graph $H$ with $k$ vertices, and thus $H$ maps to $G$ via the fixed map $\phi_{H}^{K}$. Since $\phi_{H}^{K}$ is an isomorphism, ${\phi_{H}^{K}}^{-1}(P_j)$ is an independent $\lambda^{2}$-partition of $H$. To satisfy $\lambda^{1} \sim \lambda^{2}$, all parts of $P^{\prime}_k$ and $P_j$ with more than one vertex must match, and for the $1$-parts, we have $\binom{n-m}{k-m}$ choices. Note that for parts with more than $1$ vertex, since we fixed the isomorphism between $H$ and $K$, the matching between non $1$-parts of $H$ and $G$ comes canonically with respect to this isomorphism. Thus, each $\lambda^{1}$-partition is counted exactly $\binom{n-m}{k-m}$ times.
\end{proof}

We proved that the relation between $c_\lambda^G$s and $c_{\lambda^\prime}^H$s is linear. Now, to calculate $c_\lambda^G$s, we shall create a matrix containing the information we need. Lemma \ref{lem:c_lamda_of_k_lamda} and Corollaries \ref{cor:lambda-mat-rank}, \ref{cor:forest-lambda-mat-rank} and \ref{cor:lamdba-mat-linear-relation} are the results concerning this matrix. 

\begin{definition} \label{def:lambda-mat}
    Let $\lambda^1, \lambda^2, \ldots, \lambda^{p(n)}$ represent all partitions of $n$, and let $G_1, G_2, \ldots, G_s$ be a family of graphs with $n$ vertices. We define the \emph{$\lambda$-matrix} of this family as follows, where $c_{i}^{j}$ indicates $c^{G_j}_{\lambda^{i}}$:
    \begin{center}
        $\begin{bmatrix}
            c_{1}^{1} & c_{2}^{1} & c_{3}^{1} & \cdots & c_{p(n)-1}^{1} & c_{p(n)}^{1} \\
            c_{1}^{2} & c_{2}^{2}& c_{3}^{2} & \cdots & c_{p(n)-1}^{2} & c_{p(n)}^{2} \\
           \vdots & \vdots & \vdots & \ddots & \vdots & \vdots \\
             c_{1}^{s-1} & c_{2}^{s-1} & c_{3}^{s-1} & \cdots & c_{p(n)-1}^{s-1} & c_{p(n)}^{s-1} \\
             c_{1}^{s} & c_{2}^{s} & c_{3}^{s} & \cdots & c_{p(n)-1}^{s} & c_{p(n)}^{s}
        \end{bmatrix}$
    \end{center}
\end{definition}

\begin{lemma} \label{lem:c_lamda_of_k_lamda}
    For the graph $K_\lambda$, where $\lambda = (\lambda_1, \lambda_2, \ldots, \lambda_\ell) \vdash n$, and for any partition $\lambda^{\prime} = (\lambda^{\prime}_1, \lambda^{\prime}_2, \ldots , \lambda^{\prime}_r) \vdash n$, it holds that $c_{\lambda^{\prime}}^{K_\lambda} \neq 0$ if and only if $\lambda^{\prime} \leq \lambda$.
\end{lemma}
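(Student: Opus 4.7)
The plan is to exploit a very restrictive structural feature of the complete multipartite graph: if $V_1, \ldots, V_\ell$ are the color classes of $K_\lambda$ with $|V_i|=\lambda_i$, then any two vertices from different classes are adjacent, so every independent set of $K_\lambda$ must lie inside a single $V_i$. This means that an independent $\lambda'$-partition of $K_\lambda$ is nothing more than a way of assigning each part of $\lambda'$ to some color class $V_i$ such that, for each $i$, the parts assigned to $V_i$ subdivide $V_i$. The proof then becomes a translation between such assignments and the coarsening relation $\lambda' \leq \lambda$.

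For the $(\Leftarrow)$ direction I would argue as follows. Suppose $\lambda' \leq \lambda$; by the definition of the coarsening relation, the multiset of parts of $\lambda'$ can be split into $\ell$ sub-multisets $A_1, \ldots, A_\ell$ whose sums are $\lambda_1, \ldots, \lambda_\ell$ respectively. For each $i$, choose any partition of $V_i$ into blocks whose sizes are the entries of $A_i$. The union over $i$ of all these blocks is a partition of $V(K_\lambda)$ into independent sets whose size multiset equals that of $\lambda'$, i.e., an independent $\lambda'$-partition. Hence $c_{\lambda'}^{K_\lambda} \geq 1 > 0$.

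For the $(\Rightarrow)$ direction I would fix an independent $\lambda'$-partition $P = \{Q_1, \ldots, Q_r\}$ of $K_\lambda$, which exists by hypothesis. By the structural observation each $Q_j$ is contained in some color class $V_{i(j)}$. For each $i \in \{1, \ldots, \ell\}$ the family $\{Q_j : i(j) = i\}$ is then a partition of $V_i$, so the sizes of these blocks form a sub-multiset of $\{\lambda'_1, \ldots, \lambda'_r\}$ summing to $\lambda_i$. Doing this for every $i$ exhibits the multiset of parts of $\lambda$ as obtained from $\lambda'$ by merging parts, which is exactly $\lambda' \leq \lambda$.

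The main obstacle is not conceptual but notational: keeping track of multiplicities in the parts of $\lambda'$ and making sure the bijection between "groupings of parts of $\lambda'$ summing to the $\lambda_i$'s" and "independent $\lambda'$-partitions of $K_\lambda$" is stated cleanly. Once the independent-set structure of $K_\lambda$ is written down, everything else is essentially a definition chase, and no further tools from the paper are needed for this lemma.
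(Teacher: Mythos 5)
Your proposal is correct and follows essentially the same approach as the paper: both proofs hinge on the observation that every independent set of $K_\lambda$ must lie entirely inside a single color class, and both directions then reduce to translating between independent $\lambda'$-partitions and groupings of the parts of $\lambda'$ whose blocks sum to the $\lambda_i$'s.
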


\begin{proof}
    In Theorem \ref{prop:lambda-mat-vec-full-homo}, we established that $c_{\lambda}^{G}$ represents the number of independent $\lambda$-partitions of $G$. Therefore, $c_{\lambda^{\prime}}^{K_\lambda}$ is the number of $\lambda^{\prime}$-partitions in $K_\lambda$. Let $A = \left\{ P^{\prime}_1, P^{\prime}_2, \ldots, P^{\prime}_r \right\}$ be an independent $\lambda^{\prime}$-partition of $K_\lambda$ and assume $B = \left\{ P_1, P_2, \ldots, P_\ell \right\}$ is the unique independent $\lambda$-partition of $K_\lambda$.

    For any $P^{\prime}_{i}$ that contains some $v \in P_j$, if there exists any $u \in P^{\prime}_{i}$ such that $u \notin P_j$, then there must be no edges connecting $v$ to $u$, which would contradict the definition of $K_\lambda$. Consequently, if $A$ is an independent $\lambda^{\prime}$-partition of $K_\lambda$, then each part of $A$ is a subset of a part in the $\lambda$-partition $B$. This means that by taking the union of the parts of $A$ contained within each part of $B$, we can reconstruct $B$ without putting any two adjacent vertices together. Therefore, if $c_{\lambda^{\prime}}^{K_\lambda} \neq 0$, then $\lambda^{\prime} \leq \lambda$. Conversely, if $\lambda^{\prime} \leq \lambda$, we can partition the parts of the independent $\lambda$-partition to achieve $\lambda^{\prime}$, implying that $c_{\lambda^{\prime}}^{K_\lambda} \neq 0$.
\end{proof}

\begin{corollary} \label{cor:lambda-mat-rank}
    The $\lambda$-matrix of all graphs with $n$ vertices is of full rank.
\end{corollary}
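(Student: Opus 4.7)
The plan is to exhibit an invertible $p(n) \times p(n)$ submatrix of the $\lambda$-matrix, which immediately forces the matrix to have rank at least $p(n)$ and therefore, since it has exactly $p(n)$ columns, full rank. The $p(n)$ rows I single out are those indexed by the complete multipartite graphs $K_{\lambda}$ as $\lambda$ ranges over the partitions of $n$; these are distinct elements of $\mathcal{G}_n$ by Definition \ref{lambda-graphs}, so each appears as some row of the $\lambda$-matrix.

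First I fix a linear extension of the refinement partial order $\leq$ on partitions of $n$, listing finer partitions before coarser ones, so that $\lambda^1 = (1, 1, \ldots, 1)$ appears first and $\lambda^{p(n)} = (n)$ appears last. I reorder both the columns of the $\lambda$-matrix and the selected rows according to this extension, so that row $i$ corresponds to the graph $K_{\lambda^i}$ and column $j$ corresponds to the partition $\lambda^j$.

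Next I apply Lemma \ref{lem:c_lamda_of_k_lamda}, which says $c^{K_{\lambda^i}}_{\lambda^j} \neq 0$ if and only if $\lambda^j \leq \lambda^i$. Because the chosen ordering is a linear extension of $\leq$, any $j > i$ forces $\lambda^j \not\leq \lambda^i$ (either $\lambda^j$ is strictly coarser than $\lambda^i$, or they are incomparable), so the $(i,j)$ entry of the selected submatrix vanishes; meanwhile the diagonal entry $c^{K_{\lambda^i}}_{\lambda^i}$ is nonzero since $\lambda^i \leq \lambda^i$. Hence the $p(n) \times p(n)$ submatrix is lower triangular with nonzero diagonal, and is therefore invertible.

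Since an invertible $p(n) \times p(n)$ submatrix sits inside the $\lambda$-matrix, the full $\lambda$-matrix has rank $p(n)$, which is its maximal possible rank. I do not expect any genuine obstacle: the entire argument reduces to the triangularization explained above, and the only point requiring care is verifying that the refinement order does admit a linear extension placing finer partitions first, which is immediate because it is a partial order on the finite set of partitions of $n$.
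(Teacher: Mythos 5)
Your proof is correct and follows essentially the same route as the paper: select the rows indexed by the complete multipartite graphs $K_\lambda$, invoke Lemma~\ref{lem:c_lamda_of_k_lamda}, and observe that an ordering of partitions compatible with refinement yields a triangular submatrix with nonzero diagonal. The only cosmetic difference is that the paper sorts partitions by length from shortest to longest (obtaining an upper-triangular matrix), whereas you sort by a linear extension placing finer partitions first (obtaining a lower-triangular one); these are just reversed orderings of the same idea.
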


\begin{proof}
    The rank of the $\lambda$-matrix is at most $p(n)$, the number of partitions of $n$. We construct $p(n)$ graphs with $n$ vertices and demonstrate that their $\lambda$-matrix is of full rank. Let $\lambda^1, \lambda^2, \ldots, \lambda^{p(n)}$ be all partitions of $n$, ordered by length from shortest to longest. The order among partitions of the same size is arbitrary. We claim that the following square matrix, in which $c^{j}_{i}$ represents $c^{K_{\lambda^j}}_{\lambda^{i}}$, is of full rank:
	\begin{center}
		$\begin{bmatrix}
			c_{1}^{1} & c_{2}^{1} & c_{3}^{1} & \cdots & c_{p(n)-1}^{1} & c_{p(n)}^{1} \\
			c_{1}^{2} & c_{2}^{2}& c_{3}^{2} & \cdots & c_{p(n)-1}^{2} & c_{p(n)}^{2} \\
			\vdots & \vdots & \vdots & \ddots & \vdots & \vdots \\
			c_{1}^{p(n)-1} & c_{2}^{p(n)-1} & c_{3}^{p(n)-1} & \cdots & c_{p(n)-1}^{p(n)-1} & c_{p(n)}^{p(n)-1} \\
			c_{1}^{p(n)} & c_{2}^{p(n)} & c_{3}^{p(n)} & \cdots & c_{p(n)-1}^{p(n)} & c_{p(n)}^{p(n)}
		\end{bmatrix}$
	\end{center}
    Observe that the main diagonal contains $c^{K_{\lambda^i}}_{\lambda^{i}}$, and for any $K_{\lambda}$, we have $c_{\lambda}^{K_{\lambda}} = 1$. Therefore, each entry on the main diagonal is equal to one. Additionally, for each diagonal element $c_{i}^{i}$, the length of the partition assigned to each row increases or remains the same as we proceed down the columns. By Lemma \ref{lem:c_lamda_of_k_lamda}, for each column $i$, any entry $c_{j}^{i}$ where $j < i$ is zero. This property makes the matrix upper triangular with all diagonal elements equal to one. Hence, the matrix is of full rank.
\end{proof}

Now, we can prove Lemma \ref{lem:dim-vec-set-graph}.

\begin{proof}[Proof of Lemma \ref{lem:dim-vec-set-graph}]
    The result follows from Corollary \ref{cor:lambda-mat-rank}, where it was established that the $\lambda$-matrix corresponding to all graphs $K_\lambda$ is of full rank. Thus, we complete the proof.
\end{proof}

\begin{corollary} \label{cor:forest-lambda-mat-rank}
    The $\lambda$-matrix of all forests with $n$ vertices is of full rank.
\end{corollary}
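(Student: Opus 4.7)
The plan is to exhibit a $p(n) \times p(n)$ invertible submatrix inside the $\lambda$-matrix of all forests with $n$ vertices, by appealing directly to Theorem \ref{thm:vec-set-forest-base}. First, I would fix an arbitrary tree $T_k$ on $k$ vertices for each $k \geq 1$, and for each partition $\lambda = (\lambda_1, \ldots, \lambda_\ell) \vdash n$ form the disjoint union $T_\lambda \triangleq T_{\lambda_1} \sqcup \cdots \sqcup T_{\lambda_\ell}$, which is a forest with $n$ vertices. Since distinct partitions $\lambda$ give forests with distinct partition functions, the $T_\lambda$ are pairwise non-isomorphic, yielding $p(n)$ distinct rows inside the $\lambda$-matrix of all forests on $n$ vertices.

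Next, I would observe that the row of the $\lambda$-matrix indexed by a graph $G$ is precisely the coordinate vector of $X_G$ in the monomial symmetric-function basis $\{m_\mu\}_{\mu \vdash n}$. Indeed, the proof of Theorem \ref{prop:lambda-mat-vec-full-homo} identifies $c^G_\mu$ with the number of independent $\mu$-partitions of $G$, and grouping the proper colorings of $G$ in Definition \ref{def:csf} by their color-class set partition gives the standard identity
$$
    X_G = \sum_{\mu \vdash n} c^G_\mu \, m_\mu.
$$
Hence the $p(n) \times p(n)$ submatrix of the $\lambda$-matrix consisting of the rows indexed by $\{T_\lambda : \lambda \vdash n\}$ is exactly the change-of-basis matrix from $\{X_{T_\lambda}\}_{\lambda \vdash n}$ to $\{m_\mu\}_{\mu \vdash n}$.

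By Theorem \ref{thm:vec-set-forest-base} (which is available at this point, since its proof only uses Lemmas \ref{lem:forest-graph-gen} and \ref{lem:dim-vec-set-graph}, both already proved), the set $\{X_{T_\lambda} : \lambda \vdash n\}$ is a $\mathbb{Q}$-basis of $\Lambda^n$. Consequently, the change-of-basis matrix above is invertible. Since the $\lambda$-matrix has only $p(n)$ columns and contains an invertible $p(n) \times p(n)$ submatrix, its rank equals $p(n)$, so it is of full rank.

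I do not foresee a real obstacle: the only point requiring care is the identification of the $G$-row of the $\lambda$-matrix with the $m$-expansion of $X_G$, which is immediate from the definition of $c^G_\mu$. Everything else reduces to invoking Theorem \ref{thm:vec-set-forest-base}. It is worth noting that this argument is strictly softer than the one used for Corollary \ref{cor:lambda-mat-rank}: there, the family $\{K_\lambda\}$ produced an explicit unitriangular $\lambda$-matrix via Lemma \ref{lem:c_lamda_of_k_lamda}, whereas here one cannot expect any such triangular structure from $\{T_\lambda\}$, so the linear independence must be imported from the already-proved chromatic-basis theorem rather than verified by direct combinatorial inspection.
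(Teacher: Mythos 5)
Your proof is correct and follows the same route as the paper, which simply states that the corollary ``follows directly from Theorem~\ref{thm:vec-set-forest-base}.'' You have spelled out exactly why: the rows of the $\lambda$-matrix indexed by the forests $T_\lambda$ give the change-of-basis matrix from $\{X_{T_\lambda}\}_{\lambda\vdash n}$ to the monomial basis of $\Lambda^n$, and Theorem~\ref{thm:vec-set-forest-base} guarantees this matrix is invertible, so the full $\lambda$-matrix (having only $p(n)$ columns) has rank $p(n)$.
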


\begin{proof}
    The result follows directly from Theorem \ref{thm:vec-set-forest-base}.
\end{proof}

Additionally, it has been proved by Gonzalez, Orellana, and Tomba in~\cite{gonzalez2024chromatic} that the rank of $\lambda$-matrix of all Trees equals $p(n) - n + 1$. Also, a similar version of our $\lambda$-matrix is introduced in the mentioned paper and is called $n$-\emph{CSF matrix}.

\begin{corollary} \label{cor:lamdba-mat-linear-relation}
    For a graph $G$ with $n$ vertices and an integer $k < n$, if $\lambda_1, \lambda_2, \ldots, \lambda_{p(k)}$ are all the partitions of $n$ that can be $k$-reduced and $\mathcal{H}=\left\{H_1, H_2, \ldots, H_s\right\}$ is the family of all graphs with $k$ vertices. Then, if $c_{i}^j = c_{\lambda^{i \prime}}^{H_j}$ and $M_i = \binom{n-m_i}{k-m_i}$, where $ \lambda^{i\ast} \vdash m_i$ and $\lambda ^{i \prime}$ is the $k$-reduced form of $\lambda ^ i$, we have:

 \begin{center}
        $\begin{bmatrix}
            \binom{G}{H_1} & \binom{G}{H_2} & \cdots & \binom{G}{H_s} 
        \end{bmatrix}
        \begin{bmatrix}
            c_{1}^{1} & c_{2}^{1} & c_{3}^{1} & \cdots & c_{p(n)-1}^{1} & c_{p(n)}^{1} \\
            c_{1}^{2} & c_{2}^{2}& c_{3}^{2} & \cdots & c_{p(n)-1}^{2} & c_{p(n)}^{2} \\
           \vdots & \vdots & \vdots & \ddots & \vdots & \vdots \\
             c_{1}^{s-1} & c_{2}^{s-1} & c_{3}^{s-1} & \cdots & c_{p(n)-1}^{s-1} & c_{p(n)}^{s-1} \\
             c_{1}^{s} & c_{2}^{s} & c_{3}^{s} & \cdots & c_{p(n)-1}^{s} & c_{p(n)}^{s}
        \end{bmatrix}
        = 
        \begin{bmatrix}
            M_1 c_1 & M_2 c_2 & \cdots & M_{p(k)} c_{p(k)}
        \end{bmatrix}.$
    \end{center}
\end{corollary}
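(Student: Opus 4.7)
The plan is to obtain this matrix identity by applying Theorem \ref{prop:lambda-mat-vec-full-homo} once for each $k$-reducible partition of $n$ and then stacking the resulting $p(k)$ scalar identities side by side into a single row-vector equation.

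First, I fix the enumeration $\lambda^1, \dots, \lambda^{p(k)}$ of the $k$-reducible partitions of $n$. There are exactly $p(k)$ such partitions because the map $\lambda^i \mapsto \lambda^{i\prime}$ is a bijection from the $k$-reducible partitions of $n$ onto all partitions of $k$, whose inverse pads a given partition of $k$ with $n-k$ additional $1$-parts. I also fix the enumeration $H_1, \dots, H_s$ of $\mathcal{H}$. For each $i$, I invoke Theorem \ref{prop:lambda-mat-vec-full-homo} with $\lambda^i$ playing the role of $\lambda^1$ and $\lambda^{i\prime}$ playing the role of $\lambda^2$: the equivalence hypothesis $\lambda^i \sim \lambda^{i\prime}$ holds by the very definition of the $k$-reduced form, and with $\lambda^{i\ast} \vdash m_i$ the constant in the theorem is exactly $M_i = \binom{n-m_i}{k-m_i}$. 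Clearing that denominator, the theorem yields
$$M_i \, c_i \;=\; M_i \, c^{G}_{\lambda^i} \;=\; \sum_{j=1}^{s} \binom{G}{H_j}\, c^{H_j}_{\lambda^{i\prime}} \;=\; \sum_{j=1}^{s} \binom{G}{H_j}\, c_i^{\,j}.$$

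Second, I recognise the right-hand side above as the $i$-th coordinate of the product of the row vector $\bigl[\binom{G}{H_1}\;\cdots\;\binom{G}{H_s}\bigr]$ with the $\lambda$-matrix $[c_i^{\,j}]$ displayed in the statement; collecting the $p(k)$ scalar identities obtained by letting $i$ range over $1, \dots, p(k)$ into a single row-vector equation produces exactly the matrix identity of the corollary.

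The argument is essentially a bookkeeping exercise, so the only point requiring real care is to keep the two different ``reductions'' of $\lambda^i$ separate: the constant $M_i$ is formed from the size $m_i$ of the \emph{reduced} form $\lambda^{i\ast}$, whereas the column of the $\lambda$-matrix indexed by $i$ has its entries computed from the \emph{$k$-reduced} form $\lambda^{i\prime} \vdash k$. One should also note that partitions of $n$ that are not $k$-reducible simply contribute no equation, which is consistent with the right-hand side row vector having length $p(k)$ rather than $p(n)$.
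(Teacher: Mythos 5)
Your proposal is correct and takes essentially the same route as the paper, which simply cites Theorem~\ref{prop:lambda-mat-vec-full-homo} as the source; you spell out the bookkeeping — one application of the theorem per $k$-reducible partition $\lambda^i$, clearing the binomial denominator, and assembling the resulting scalar identities into the row-vector equation — which is exactly what ``follows directly'' is meant to cover.
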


\begin{proof}
    The result follows directly from Theorem \ref{prop:lambda-mat-vec-full-homo}.
\end{proof}

Returning to the reconstruction conjecture, an intriguing question arises: is it possible to reconstruct a tree’s deck of cards using $\lambda$-matrices? Given that $\lambda$-matrices are of full rank, one might initially hope that they could allow for complete recovery of the cards. However, this is not the case. The number of forests and trees with $n$ vertices far exceeds $p(n)$, the dimensions of the corresponding spaces. Consequently, it is impossible to recover data from the larger space using information derived from the smaller space. Nevertheless, the full rank of the matrix imposes constraints on the possible configurations of the cards, making this an interesting avenue for further investigation.


\section*{Acknowledgments} \label{section:akcn}

The authors would like to express their gratitude to Farid Aliniaeifard for his invaluable supervision and assistance throughout this project, which would have been impossible without his help and insights.  We also extend our appreciation to José Aliste-Prieto and Shu Xiao Li for their constructive comments, which significantly contributed to improving the clarity of this paper.

\pagebreak
\printbibliography

\end{document}